\newcommand{\no}[1]{#1}
\renewcommand{\no}[1]{}
\renewcommand{\Delta}{\upDelta}}
\date{\today}
\def\eps{\varepsilon}
\def\RR{{\rm I~\hspace{-1.15ex}R} }
\def\CC{\rm \hbox{C\kern-.56em\raise.4ex
         \hbox{$\scriptscriptstyle |$}\kern+0.5 em }}
\newcommand{\be}{\begin{equation}}
\newcommand{\ee}{\end{equation}}
\newcommand{\ba}{\begin{equation*}}
\newcommand{\ea}{\end{equation*}}
\newcommand{\beq}{\begin{eqnarray}}
\newcommand{\eeq}{\end{eqnarray}}
\newcommand{\beqs}{\begin{eqnarray*}}
\newcommand{\eeqs}{\end{eqnarray*}}
\newcommand{\rfb}[1]{\mbox{\rm
   (\ref{#1})}\ifx\undefined\stillediting\else:\fbox{$#1$}\fi}
\newtheorem{lemma}{Lemma}[section]
\newtheorem{remark}{Remark}[section]
\newtheorem{definition}{Definition}[section]
\newtheorem{proposition}{Proposition}[section]
\newtheorem{theorem}{Theorem}[section]
\newcommand{\cc}{\mathfrak C}
\title[Weak Observability]{On Weak Observability  For Evolution Systems with Skew-Adjoint Generators}
\author{Ka\"{\i}s Ammari\dag}
\address{\dag Universit\'e de Monastir, Facult\'e des Sciences  de Monastir, D\'epartement de Math\'ematiques, UR Analyse et Contr\^ole des EDP, UR 13ES64, 5019, Monastir, Tunisia}
\email{kais.ammari@fsm.rnu.tn}
\author{Faouzi Triki\ddag}
\address{\ddag Laboratoire Jean Kuntzmann,  UMR CNRS 5224, 
Universit\'e  Grenoble-Alpes, 700 Avenue Centrale,
38401 Saint-Martin-d'H\`eres, France}
\email{faouzi.triki@univ-grenoble-alpes.fr}
\date{\today}
\begin{document}

\begin{abstract}
In the paper we consider the linear  inverse problem  that consists in recovering the initial  state in a first order 
evolution equation generated by a skew-adjoint operator. We studied  the well-posedness  of the inversion
in terms of the observation operator and the spectra of the skew-adjoint generator.
The  stability estimate of the inversion  can also 
be seen as  a weak observability inequality. The proof of 
the main results is based on a new resolvent inequality and Fourier transform techniques which are of interest  themselves. 
%In this paper we discuss the link between the weak observability and the geometry of spectra of the skew-adjoint corresponding operator. 
\end{abstract}
 
\subjclass[2010]{35B35, 35B40, 37L15}
\keywords{ Conditional stability, weak observability, resolvent inequality, Hautus test, Shr\"odinger equation}

\maketitle

\tableofcontents

\vfill\break
\section{Introduction} \label{sec1}
\setcounter{equation}{0}

Let $X$ be a complex Hilbert space with norm and inner product 
denoted respectively by $\|\cdot\|_{X}$ and  $\langle \cdot,\cdot \rangle_X$.
Let $A: X\rightarrow X$ be a linear unbounded self-adjoint, strictly 
positive operator with a compact resolvent. Denote by 
${ D}(A^{\frac{1}{2}})$ the domain of 
$A^{\frac{1}{2}}$, and  introduce  for $\beta \in \mathbb R$ the scale of Hilbert
spaces $X_{\beta}$, as follows: for every
$\beta \geq 0$, $X_{\beta}= {D}(A^{\frac{\beta}{2}})$, with the norm
$\|z \|_\beta=\|A^{\frac{\beta}{2}} z\|_X$ (note that 
$0 \notin \sigma(A) $ where $\sigma(A)$
is the spectrum of $A$). The space $X_{-\beta}$ is
defined by duality with respect to the pivot space $X$ as
follows: $X_{-\beta} =X_{\beta}^*$ for $\beta>0$.\\
The operator $A$ can be extended (or restricted) to each $X_\beta$,
such that it becomes a bounded operator
\be
\label{A0extb}
A : X_\beta \rightarrow X_{\beta-2}\;\;\; \forall \beta \in \mathbb R.
\ee

\medskip

The operator $iA$ generates a strongly continuous group of isometries in $X$
 denoted $(e^{itA})_{t\in \mathbb{R}}$ \cite{TW09}.

Further, let $Y$ be a complex Hilbert space (which will be identified to its dual space) with 
norm and inner product respectively denoted by $||.||_{Y}$ and 
$\langle \cdot,\cdot \rangle_{Y}$, and let 
$C \in {\mathcal L}(X_{2}, Y)$, the space of linear bounded operators from $X_2$ into $Y$.\\

This paper is concerned with the following abstract 
 infinite-dimensional dual observation system with an output $y \in Y$  
described by  the equations 
\be
\label{maineq}
\left\{
\begin{array}{lll}
\dot z(t)- i Az(t) = 0, \, t > 0,  \\
z(0) = z_0\in X, \\
y(t) = C z(t),  \, t > 0.
\end{array}
\right.
\ee

%%%%%%%%%%%%%
In inverse problems framework the system above 
is called   the direct problem, i.e, to determine  the observation
 $y(t)= C z(t)$ of the state 
$z(t)$  for  given initial state $z_0$ and  unbounded  operator $A$. 
The inverse problem is to recover the initial state $z_0$ from the knowledge
of the observation $y(t)$ for $t\in [0, T]$ where $T>0$  is chosen to be large enough.

\medskip 

Inverse problems for evolution equations driven by  numerous applications,  have been a very 
active area in mathematical and numerical research over the last decades \cite{Is}. They are 
 intrinsically difficult to solve: this fact is due in part to their very mathematical structure and  to the
  effect that generally only partial data is available. Many different   linear inverse problems  in evolution
    equations  related to data assimilation,
medical imaging, and geoscience,  may fit in the general formulation of the system~\eqref{maineq} (see for instance  \cite{Ya, AC1, AC3, ACT, ACT1, BaK, RT} and references therein). 
 
\medskip
    
  %%%%%%%%%%%%%%%%%
The system \eqref{maineq} 
has a  unique weak solution $z\in C(\RR,X)$
defined by:
\begin{eqnarray}
\label{eqmildsol}
z(t)=e^{it A}z_{0}.
\end{eqnarray}
If $z_0$ is not in $X_2$, in general $z(t)$ does not belong 
to $X_2$, and hence the last equation in \eqref{maineq}
might not be defined. We further make the following additional admissibility assumption on  the observation operator $C$:
 $\forall T>0$, \; $\exists C_{T}>0$,
\begin{eqnarray} \label{admm}
\label{eqq}
\forall z_{0}\in  X_2,
\quad \int_{0}^{T}\|C e^{itA}z_{0}\|_Y^{2}dt \leq C_{T}\|{z_{0}}\|_X^{2}.
\end{eqnarray} 
We immediately deduce from the admissibility assumption that the map
from $X_2$ to $L^{2}_{loc}(\RR_+;Y)$  that assigns $y$ for each $z_0$, has a continuous extension to $X$.  Therefore the last equation in
 \eqref{maineq} is now well defined for  all $z_0\in X$. 
 Without loss of generality we assume that $C_T$ is an increasing function of $T$
 (if the assumption is not satisfied we substitute  $C_T$ by
  $ \sup_{0\leq t\leq T}C_T$).   

\medskip

Since  $A$ is  a self-adjoint operator with a compact resolvent, it follows that 
the spectrum of $A$ is given by  
$\sigma(A) \,=\,  \{ \lambda_k, \;  k\in \mathbb N^*\}$  where $\lambda_k$ is a sequence of 
strictly increasing  real numbers.  We denote    $(\phi_k)_{k\in
\mathbb N^*}$  the  orthonormal sequence of eigenvectors of $A$ associated to the eigenvalues 
$(\lambda_k)_{k\in
\mathbb N^*}$. 

\medskip
Let $ z\in X_2\setminus\{0\}\subset X \longmapsto \lambda(z)\in \mathbb R_+$ be the $A$-frequency 
function defined by
\begin{eqnarray}  \label{frequency}
\lambda(z) &=& \langle Az, z\rangle_X \|z\|_X^{-2},\\
&=& \sum_{k=1}^{+\infty} \lambda_k \langle z, \phi_k \rangle_X^2 
\left( \sum_{k=1}^{+\infty} \langle z, \phi_k \rangle_X^2 \right)^{-1}.
\end{eqnarray} 
We observe that $z \longmapsto \lambda(z)$  is continuous on $X_2\setminus\{0\}$, and 
$\lambda(\phi_k) = \lambda_k, \, k \in \mathbb N^*$.

\medskip

Let  $\cc $ be the set of  functions 
$\psi: \mathbb R_+ \rightarrow \mathbb R_+^*$  continuous and
decreasing. Recall that if $\psi \in \cc$ is not bounded below by a strictly
positive constant it satisfies 
$\lim_{t\rightarrow +\infty}\psi(t) = 0$.

\medskip

\begin{definition}
\label{ww} 
The system (\ref{maineq})
is said to be weakly observable  in time $T>0$  if there
exists $\psi \in \cc $ such that 
  following observation inequality holds:
\begin{eqnarray} 
\label{wobs}
\forall z_{0}\in X_2,
\quad
\psi(\lambda(z_0))\|z_{0}\|_X^{2}\leq 
\int_{0}^{T} \|C e^{itA}z_{0}\|_{Y}^2 dt.
%\int_{0}^{T}\norm{\CC e^{it\A}x_{0}}^{2}dt.
 \end{eqnarray}
If  $\psi(t)$ is lower bounded, the system is said  to be exactly observable.
\end{definition}
%%%%%%%%%%%%%%%%%%%%%%%%%%%
%%%%%%%%%%%%%%%%%%%
\begin{remark} If the system  (\ref{maineq})
is  weakly observable  in time $T>0$, it is   weakly observable  in
any time $T^\prime$ larger than $T$.
The function $\psi$ appearing in the observability inequality \eqref{wobs} may depends
on the time $T$.  
\end{remark}
%%%%%%%%%%%%%%%%%%%%%

Most of the existing works on  observability  inequalities  for systems of partial differential equations are 
based on a time domain techniques as  nonharmonic series \cite{AI, KL},  multipliers method \cite{Li,Li1}, and 
microlocal analysis techniques \cite{BLR92, LL}. Only few of them  have considered  frequency domain 
techniques  in the spirit of the  well known Fattorini-Hautus test for  finite dimensional systems 
\cite{Fa, Ha,  BZbb, RTTT,  ZY97}. \\

The wanted  frequency domain test for the observability of the system (\ref{maineq})  would be only formulated
in terms of the operators $A$, $C$. The time domain system (\ref{maineq})  would be  converted into a frequency domain one, and the test would involve essentially  the solution in the frequency domain and  the observability operator $C$. The frequency domain test seems to be more suitable for numerical validation and  for the calibration  of  physical models
 for many reasons:  the parameters of the system are in general measured in frequency domain; the computation of the
 solution is more robust and efficient in frequency domain.

\medskip

The  objective  here is to derive sufficient  and if possible necessary 
 conditions on
 \begin{itemize}  
 
 \item[(i)] the spectrum of $A$, and 
 \item[(ii)]   on  the  action
 of the operator $C$  on the associated   eigenfunctions of $A$, 
 
 \end{itemize}
  such that  the closed  system \eqref{maineq}
verifies, for some $T>0,$ sufficiently  large, the inequality \eqref{wobs}. 
The aim of this paper is to obtain  Fattorini-Hautus type tests on the pair
$(A, C)$ that guarantee  the  {\it weak observability property} \eqref{wobs}.

\medskip

\medskip

The rest of the paper is organized as follows: In section \ref{sec2} we present the main results of our paper related 
to the weak observability. Section \ref{sec3} 
 contains the proof of the main Theorem \ref{main} based on new resolvent inequality and Fourier transform
 techniques. In section \ref{sec4} we study the relation between  the  spectral coercivity  of the observability operator and  his action on vector spaces spanned by eigenfunctions associated to close eigenvalues. Finally, in section \ref{sec5} 
 we apply the results of  the main  Theorem \ref{main}  to   boundary observability of the  Schr\"odinger equation in a square.

%%%%%%%%%%%%%%%%%%
\section{Main results} \label{sec2}
We present in this section the  main results of our paper.

%%%%%%%
\begin{definition} \label{dwc}
The operator $C$ is spectrally coercive if 
there exist   functions
 $\varepsilon, \psi \in \cc$   such that if  $z\in X_2 \setminus \left\{0\right\}$
satisfies
\be \label{wc}
 \frac{\|Az\|_X^2}{\|z\|_X^2} -\lambda^2(z) <\varepsilon(\lambda(z)),
\ee
then
\be \label{wcc}
\|Cz\|_Y^2 \geq  \psi(\lambda(z))  \|z\|_{X }^2.
\ee

\end{definition}
%%%%%%%%
\begin{remark}
We remark that the following relation
\be \label{magic}
 0 \leq \|(A-\lambda(z)I)z\|_X^2\|z\|_X^{-2} = \frac{\|Az\|_X^2}{\|z\|_X^2} -\lambda^2(z) 
 \ee
holds  for all $z\in X_2 \setminus\{0\}$. In addition, the equality 
 $\frac{\|Az\|_X^2}{\|z\|_X^2} -\lambda^2(z) =  0$ is satisfied if and only 
 if $z=\phi_k$ for some $ k\in \mathbb N^*$. 
 \end{remark}

Now, we are ready to announce our main result.
%%%%%%%
\begin{theorem}  \label{main}
The system \eqref{maineq} is weakly observable iff 
$C$ is spectrally coercive, that is the  following two assertions
are equivalent. 
 \begin{itemize}
 \item[(1)] 
 There exist  $\varepsilon, \psi \in \cc $  such that if  $z \in X_2 \setminus\left\{0 \right\}$
satisfying
\begin{eqnarray*}
 0\leq \frac{\|Az\|_X^2}{\|z\|_X^2} -\lambda^2(z) <\varepsilon(\lambda(z)),
\end{eqnarray*}
then
\begin{eqnarray*}
\|Cz\|_Y^2 \geq \psi(\lambda(z)) \|z\|_{X }^2.
\end{eqnarray*}

 \item[(2)] The  following weak observation inequality holds:
\begin{eqnarray} 
\label{wobst}
\forall z_{0}\in X_2,
\quad
\theta_2 \psi\left(\theta_0\left(\frac{1}{T}+\lambda(z_0)\right)\right)   \|z_0\|_X^2 \leq 
   \int_{0 }^T\|{C z(t)}\|^{2}_Yd\tau
\end{eqnarray}
for all $T\geq T(\lambda(z_0))$, where $T(\lambda(z_0))$ is the unique solution to
the equation
\beq \label{timeT}
T\varepsilon\left(\theta_0\left(\frac{1}{T}+\lambda(z_0)\right)\right)= \theta_1,
\eeq
 and $\varepsilon, \psi \in \cc $ are the functions appearing in the spectral coercivity
 of $C$.  The 
 strictly positives constants $\theta_i, \, i=0, 1, 2,$ do not depend on the parameters of the 
observability system.  In addition, the function  $\lambda \mapsto T(\lambda)$ is
 increasing.
\end{itemize}
\end{theorem}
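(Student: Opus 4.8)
The plan is to establish the two implications separately; the substance of the theorem lies in $(1)\Rightarrow(2)$, which I would prove by a windowed Fourier transform in time, whereas $(2)\Rightarrow(1)$ follows from an adiabatic perturbation estimate combined with the admissibility assumption \eqref{admm}. For $(1)\Rightarrow(2)$, fix $z_0=\sum_k a_k\phi_k\in X_2\setminus\{0\}$, a fixed smooth window $\phi$ supported in $(0,1)$, and set $\chi(t)=\phi(t/T)$. For each $\omega\in\mathbb R$ introduce the windowed state
\[ w(\omega)=\int_0^T\chi(t)e^{-i\omega t}e^{itA}z_0\,dt=\sum_k a_k\,\widehat\chi(\omega-\lambda_k)\,\phi_k\in X_2, \]
so that $Cw(\omega)=\int_0^T\chi(t)e^{-i\omega t}Ce^{itA}z_0\,dt$. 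Applying Plancherel to the $Y$-valued map $t\mapsto\chi(t)Ce^{itA}z_0$ and using $|\chi|\le\|\phi\|_\infty$ gives the upper bound
\[ \int_{\mathbb R}\|Cw(\omega)\|_Y^2\,d\omega=2\pi\int_0^T|\chi(t)|^2\|Ce^{itA}z_0\|_Y^2\,dt\le 2\pi\|\phi\|_\infty^2\int_0^T\|Ce^{itA}z_0\|_Y^2\,dt, \]
so it suffices to bound the $\omega$-integral of $\|Cw(\omega)\|_Y^2$ from below.

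The key are three identities obtained from Plancherel, the isometry $\|e^{itA}\cdot\|_X=\|\cdot\|_X$, and $\widehat{\chi'}(\xi)=i\xi\widehat\chi(\xi)$:
\[ \int_{\mathbb R}\|w(\omega)\|_X^2\,d\omega=2\pi\|z_0\|_X^2\!\int_0^T\!|\chi|^2,\quad \int_{\mathbb R}\langle Aw(\omega),w(\omega)\rangle_X\,d\omega=2\pi\lambda(z_0)\|z_0\|_X^2\!\int_0^T\!|\chi|^2, \]
\[ \int_{\mathbb R}\|(A-\omega)w(\omega)\|_X^2\,d\omega=2\pi\|z_0\|_X^2\!\int_0^T\!|\chi'|^2; \]
these quadratic-form identities are the content I would package as the announced resolvent inequality. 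Let $\mu$ be the probability measure $d\mu=\|w(\omega)\|_X^2\,d\omega/(2\pi\|z_0\|_X^2\int_0^T|\chi|^2)$. The first two identities give $\int\lambda(w(\omega))\,d\mu=\lambda(z_0)$, while the third, together with the fact that $c\mapsto\|(A-c)w\|_X^2$ is minimized at $c=\lambda(w)$ and the relation \eqref{magic}, gives $\int\big(\tfrac{\|Aw\|_X^2}{\|w\|_X^2}-\lambda^2(w)\big)\,d\mu\le c_\phi T^{-2}$ with $c_\phi=\int_0^1|\phi'|^2/\int_0^1|\phi|^2$. Two applications of Markov's inequality then produce a set $G$ with $\mu(G)\ge\half$ on which simultaneously $\lambda(w(\omega))\le\theta_0(T^{-1}+\lambda(z_0))$ and $\tfrac{\|Aw\|_X^2}{\|w\|_X^2}-\lambda^2(w)\le 4c_\phi T^{-2}$. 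For $T\ge T(\lambda(z_0))$ the defining equation \eqref{timeT} (noting that $T\mapsto T\varepsilon(\theta_0(T^{-1}+\lambda(z_0)))$ is increasing) ensures $\varepsilon(\theta_0(T^{-1}+\lambda(z_0)))\ge\theta_1/T$, so the variance bound $4c_\phi T^{-2}$ lies below $\varepsilon(\lambda(w(\omega)))$ and spectral coercivity applies pointwise on $G$, yielding $\|Cw(\omega)\|_Y^2\ge\psi(\theta_0(T^{-1}+\lambda(z_0)))\|w(\omega)\|_X^2$. Integrating over $G$, using $\mu(G)\ge\half$ and the first identity, and combining with the Plancherel upper bound yields \eqref{wobst}.

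For $(2)\Rightarrow(1)$, take $z\in X_2\setminus\{0\}$ with small variance and exploit that it then evolves almost like an eigenvector. With $B=A-\lambda(z)$,
\[ e^{itA}z-e^{it\lambda(z)}z=e^{it\lambda(z)}\!\int_0^t iBe^{isB}z\,ds,\qquad \|Ce^{isB}Bz\|_Y=\|Ce^{isA}(A-\lambda(z))z\|_Y, \]
so the (extended) admissibility \eqref{admm} gives $\int_0^T\|Ce^{isB}Bz\|_Y^2\,ds\le C_T\big(\tfrac{\|Az\|_X^2}{\|z\|_X^2}-\lambda^2(z)\big)\|z\|_X^2$, and Cauchy--Schwarz upgrades this to $\int_0^T\|C(e^{itA}z-e^{it\lambda(z)}z)\|_Y^2\,dt\le T^2C_T\big(\tfrac{\|Az\|_X^2}{\|z\|_X^2}-\lambda^2(z)\big)\|z\|_X^2$. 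Since $\|Ce^{it\lambda(z)}z\|_Y=\|Cz\|_Y$, the triangle inequality in $L^2(0,T;Y)$ together with \eqref{wobst} at $T=T(\lambda(z))$ gives
\[ \sqrt T\,\|Cz\|_Y\ge\Big(\int_0^T\|Ce^{itA}z\|_Y^2\,dt\Big)^{1/2}-T\sqrt{C_T\big(\tfrac{\|Az\|_X^2}{\|z\|_X^2}-\lambda^2(z)\big)}\,\|z\|_X. \]
Choosing $\varepsilon\in\cc$ so small that $\tfrac{\|Az\|_X^2}{\|z\|_X^2}-\lambda^2(z)<\varepsilon(\lambda(z))$ forces the subtracted term to be at most half the observability lower bound yields $\|Cz\|_Y^2\ge\psi(\lambda(z))\|z\|_X^2$ for a suitable $\psi\in\cc$; the monotonicity of $T(\cdot)$ and $C_T$, after passing to decreasing minorants, guarantees $\varepsilon,\psi\in\cc$.

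The main obstacle is the bookkeeping in $(1)\Rightarrow(2)$: guaranteeing that the good set $G$ carries a fixed fraction of the mass \emph{uniformly} in $z_0$, and reconciling the honest variance scaling $c_\phi T^{-2}$ with the coercivity threshold $\varepsilon$ through the implicit equation \eqref{timeT}, so that the constants $\theta_i$ come out genuinely independent of $z_0$, $A$ and $C$. Everything else reduces to the elementary variational identity for $\lambda(z)$ and to Plancherel bookkeeping.
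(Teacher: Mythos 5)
For the converse implication $(2)\Rightarrow(1)$ your argument is essentially the paper's: a Russell--Weiss-type perturbation (Duhamel expansion around $e^{it\lambda(z)}z$), the admissibility \eqref{admm} applied to $(A-\lambda(z))z$, the evaluation at $\tau=\lambda(z)$, and monotonicity of $T(\cdot)$ and $C_T$ to manufacture the new pair of functions in $\cc$. For the direct implication $(1)\Rightarrow(2)$, however, you take a genuinely different route. The paper first proves that spectral coercivity is equivalent to a Hautus-type resolvent inequality \eqref{resolventestimate} (Theorem~\ref{hautus}), then applies that inequality pointwise at every Fourier frequency $\tau$ to $\widehat x(\tau)$; to exploit it, the paper must bound $\lambda(\widehat x(\tau))\le 4|\tau|+c_0\lambda(z_0)$ uniformly (Theorem~\ref{Tfrequency}), which in turn forces a special window with two-sided Fourier decay \eqref{bbchi}, constructed in the Appendix. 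You bypass all of this: your three Plancherel identities compute the exact $\mu$-averages of $\lambda(w(\omega))$ and of the variance $\|Aw\|_X^2/\|w\|_X^2-\lambda^2(w)$, and Markov's inequality yields a set of frequencies of $\mu$-measure at least $\half$ on which both are controlled, and coercivity is applied only there. This buys you a proof with an arbitrary smooth window (no lower bound on $|\widehat\chi|$, hence no Appendix construction), no intermediate resolvent inequality, and a lower bound that grows linearly in $T$; the price is that your variance control is only in average, not pointwise in $\tau$, and that is exactly where your remaining difficulty lives.

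The obstacle you flag at the end is a real gap as the proof stands, and it has a precise form: on your good set the variance is $\le 4c_\phi T^{-2}$, whereas \eqref{timeT} only guarantees, for $T\ge T(\lambda(z_0))$, the threshold $\varepsilon\left(\theta_0\left(\frac{1}{T}+\lambda(z_0)\right)\right)\ge\theta_1/T$. Coercivity therefore applies only when $4c_\phi T^{-2}<\theta_1/T$, i.e. $T>4c_\phi/\theta_1$, and this is \emph{not} implied by $T\ge T(\lambda(z_0))$: if $\varepsilon$ is large, the solution of \eqref{timeT} can be arbitrarily small (one only gets $T(\lambda)\ge\theta_1/\varepsilon(0)$). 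The same issue resurfaces when you convert your factor $T$ into the universal constant $\theta_2$. The repair is short: spectral coercivity with $(\varepsilon,\psi)$ implies spectral coercivity with $(\min(\varepsilon,1),\psi)$, so you may normalize $\varepsilon\le 1$ at the outset; then \eqref{timeT} forces $T(\lambda)\ge\theta_1$ for every $\lambda$, and choosing $\theta_1\ge 2\sqrt{c_\phi}$ (a constant depending only on the fixed window $\phi$) makes $4c_\phi T^{-2}\le\theta_1/T$ automatic for all admissible $T$, while also bounding $T$ below so that $\theta_2$ comes out universal. It is worth noting that the paper's route never meets this issue: its error term is directly of size $\left(T\varepsilon\right)^{-1}\|z_0\|_X^2$, so the absorption condition coincides exactly with \eqref{timeT}, whereas your variance scales like $T^{-2}$, a mismatch with the $T^{-1}$ scaling built into \eqref{timeT} that must be reconciled by the normalization above.
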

%%%%%%%%

The above theorem can  be viewed as a extension of several results in the literature \cite{Ha,  BZbb, RTTT,  ZY97, Miller05}. 
%%%%%%%%%%%%%%%%%%%%%%%%%
\section{Proof of the main Theorem~\ref{main}} \label{sec3}
%%%%%%%%%%%%%%%%%%%%%%%%%
In order to prove  our main theorem, we need to derive a sequence 
of preliminary results. We start with the  main tool in the proof  of the theorem which is a  generalized
Hautus-type test.  
 
%%%%%%%%%%%%%%%%%%
\begin{theorem}
\label{hautus}
The operator $C\in \mathcal{L}(X_2, Y),$ is spectrally coercive, if and only if there exist functions $\psi,\, \varepsilon
\in \mathfrak C $, such that  the following  resolvent inequality  holds
\beq \label{resolventestimate}
\|z\|_X^2 \leq \inf\left\{\frac{\|{C z}\|^{2}_Y}{\psi(\lambda(z))}, 
\frac{\|{(A-\lambda I) z}\|^{2}_X}{(\lambda-\lambda(z))^2+\varepsilon(\lambda(z))} \right\}, \;\forall 
\lambda\in \RR,\; \forall z\in X_2\setminus\{0\}.
\eeq 

\end{theorem}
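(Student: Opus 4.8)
The engine of the proof is an exact orthogonal decomposition of the resolvent term. Expanding $z=\sum_k \langle z,\phi_k\rangle_X\,\phi_k$ in the eigenbasis and using that, by \eqref{frequency}, $\lambda(z)$ is the $\|\cdot\|_X^2$-weighted mean of the $\lambda_k$ (so that $\sum_k(\lambda_k-\lambda(z))\langle z,\phi_k\rangle_X^2=\langle Az,z\rangle_X-\lambda(z)\|z\|_X^2=0$), the cross term in $\sum_k(\lambda_k-\lambda)^2\langle z,\phi_k\rangle_X^2$ drops out, yielding
\[
\|(A-\lambda I)z\|_X^2=\|(A-\lambda(z)I)z\|_X^2+(\lambda-\lambda(z))^2\,\|z\|_X^2,\qquad \forall\lambda\in\mathbb R,\ \forall z\in X_2\setminus\{0\}.
\]
Writing $\delta(z):=\|(A-\lambda(z)I)z\|_X^2\,\|z\|_X^{-2}$, which by \eqref{magic} equals $\|Az\|_X^2\|z\|_X^{-2}-\lambda^2(z)$ and measures the distance of $z$ from the eigenspaces, this turns the resolvent ratio into an explicit function of the single variable $t=(\lambda-\lambda(z))^2\ge 0$,
\[
\frac{\|(A-\lambda I)z\|_X^2}{(\lambda-\lambda(z))^2+\varepsilon(\lambda(z))}=\frac{\delta(z)\,\|z\|_X^2+t\,\|z\|_X^2}{t+\varepsilon(\lambda(z))}.
\]

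The second step is to optimize this ratio over $\lambda\in\mathbb R$. A one-line derivative computation shows that $t\mapsto\frac{\delta(z)\|z\|_X^2+t\|z\|_X^2}{t+\varepsilon(\lambda(z))}$ is monotone, increasing when $\delta(z)<\varepsilon(\lambda(z))$ and decreasing when $\delta(z)>\varepsilon(\lambda(z))$, with value $\delta(z)\|z\|_X^2/\varepsilon(\lambda(z))$ at $t=0$ (that is, $\lambda=\lambda(z)$) and limit $\|z\|_X^2$ as $t\to\infty$. Consequently the whole family of inequalities over $\lambda\in\mathbb R$ collapses to the single critical value $\lambda=\lambda(z)$, and the resolvent estimate holds for a given $z$ precisely when $\|z\|_X^2$ is controlled by the right-hand side there: either the resolvent term already dominates, which happens exactly when $\delta(z)\ge\varepsilon(\lambda(z))$, or else the observation term must, i.e. $\|Cz\|_Y^2\ge\psi(\lambda(z))\|z\|_X^2$.

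This dichotomy is exactly Definition \ref{dwc}, and it gives both implications. For the reverse direction, assuming the resolvent estimate I restrict to $z$ with $\delta(z)<\varepsilon(\lambda(z))$ and evaluate at $\lambda=\lambda(z)$: the resolvent term is then $\delta(z)\|z\|_X^2/\varepsilon(\lambda(z))<\|z\|_X^2$, so the observation term must carry the bound, yielding $\|Cz\|_Y^2\ge\psi(\lambda(z))\|z\|_X^2$, hence spectral coercivity with the same $\varepsilon,\psi$. For the forward direction, assuming spectral coercivity I fix arbitrary $z$ and $\lambda$ and split: if $\delta(z)\ge\varepsilon(\lambda(z))$ the decomposition identity delivers the resolvent-term bound for every $\lambda$, while if $\delta(z)<\varepsilon(\lambda(z))$ Definition \ref{dwc} supplies the observation-term bound; together they give the resolvent estimate.

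The main obstacle is bookkeeping rather than conceptual: one must verify that the reduction over $\lambda$ is valid uniformly in $z$, and that the resulting pair $\varepsilon,\psi$ can be taken in $\cc$ — continuous, decreasing, and strictly positive — which may require replacing them by a common minorant in $\cc$ so that a single pair works simultaneously in both terms and respects the monotonicity built into $\cc$. Once the orthogonal decomposition and the monotonicity in $t$ are in hand, the equivalence follows immediately from the case analysis above.
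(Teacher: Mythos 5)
Your proof is correct, and its skeleton is the same as the paper's: the orthogonality identity \eqref{maineqh} (which you rederive from the mean-value property $\langle (A-\lambda(z)I)z,z\rangle_X=0$) together with the dichotomy between $\delta(z):=\|Az\|_X^2\|z\|_X^{-2}-\lambda^2(z)\geq\varepsilon(\lambda(z))$ and $\delta(z)<\varepsilon(\lambda(z))$; your forward implication is exactly the paper's. The genuine difference is in the converse, and there your route is better. Writing $a:=\|Cz\|_Y^2/\psi(\lambda(z))$ and $b_\lambda$ for the resolvent quotient, the paper splits into two cases: either $a\leq b_\lambda$ for some $\lambda$, and the conclusion is read off the ``$\inf$'', or $a>b_\lambda$ for every $\lambda$, and one sends $\lambda\to\infty$, where $b_\lambda\to\|z\|_X^2$. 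You instead evaluate the estimate once, at the minimizer $\lambda=\lambda(z)$, where by \eqref{maineqh} one has $b_{\lambda(z)}=(\delta(z)/\varepsilon(\lambda(z)))\|z\|_X^2$, which is $<\|z\|_X^2$ precisely under the coercivity hypothesis $\delta(z)<\varepsilon(\lambda(z))$; hence the observation term must carry the bound. This matters because \eqref{resolventestimate} read literally as a minimum is unsatisfiable (take $z=\phi_k$ and $\lambda=\lambda_k$: the resolvent quotient vanishes), so the estimate has to be understood as ``at least one of the two bounds holds'' ($\max$ in place of $\inf$, or equivalently a sum, as in Burq--Zworski), which is what the paper's forward direction actually establishes. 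Under that reading the paper's case (i) of the converse is no longer licit --- from $\|z\|_X^2\leq\max\{a,b_\lambda\}$ and $a\leq b_\lambda$ one only gets $\|z\|_X^2\leq b_\lambda$ --- whereas your single evaluation at $\lambda=\lambda(z)$, which is also the only place the hypothesis $\delta(z)<\varepsilon(\lambda(z))$ is used, is sound and needs neither the case split nor the limit. One last point: your closing concern about passing to a common minorant in $\cc$ is unnecessary; both implications preserve the pair $(\varepsilon,\psi)$ unchanged, as your own argument shows.
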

%%%%%%%%%%%%%%%%%
\begin{proof} 
 Let $z\in X_2\setminus\{0\}$ be fixed. A forward computation gives the following key identity:
 \beq \label{maineqh}
 \|{(A-\lambda I) z}\|^{2}_X = (\lambda-\lambda(z))^2 \|z\|_X^2 +
  \|{(A-\lambda(z) I) z}\|^{2}_X.
 \eeq
 We remark that the minimum of $\|{(A-\lambda I) z}\|^{2}_X$ for a fixed $z$ with respect to
 $\lambda \in \RR$ is reached at  $\lambda = \lambda(z).$\\
 
  We first assume that $C\in \mathcal{L}(X_2, Y),$ is spectrally coercive and prove 
  that \eqref{resolventestimate}
 is satisfied.  Let now  $ \varepsilon, \, \psi \in \mathfrak C $ the functions appearing in
  the spectral coercivity of the operator $C$ in Definition~\ref{dwc}, and  consider the following 
  two possible cases:\\ 

(i) The inequality 
$\|A z\|^2_X- \lambda^2(z)\|z\|_X^2<\varepsilon(\lambda(z)) \|z\|_X^2$ is satisfied. Then by the  
spectral coercivity of $C$, we deduce 
\begin{eqnarray} \label{i1}
\|Cz\|_Y^2 \geq \psi(\lambda(z)) \|z\|_{X}^2.
\end{eqnarray}

(ii) The inequality 
$\|A z\|^2_X- \lambda^2(z)\|z\|_X^2 \geq \varepsilon(\lambda(z)) \|z\|_X^2$ holds. Then,  the identity
\eqref{maineqh} implies 
\beq \label{i2}
\|{(A-\lambda I) z}\|^{2}_X \geq  \left((\lambda-\lambda(z))^2 + \varepsilon(\lambda(z))\right) \|z\|_X^2.
\eeq

By combining both inequalities \eqref{i1} and \eqref{i2},  we obtain the resolvent inequality 
\eqref{resolventestimate}.\\

We now assume that  \eqref{resolventestimate} holds and, we shall show that 
$C\in \mathcal{L}(X_2, Y),$ satisfies the spectrally coercivity in Definition~\ref{dwc}.
 Let   $ \varepsilon, \, \psi \in \mathfrak C $ the functions appearing in \eqref{resolventestimate}, 
 and  assume that $z \in X_2\setminus\{0\}$ satisfies 
  \begin{eqnarray} \label{j1}
\|{(A-\lambda(z) I) z}\|^{2}_X=  \|Az\|_X^2 -\lambda^2(z)\|z\|_X^2 <\varepsilon(\lambda(z))\|z\|_X^2.
\end{eqnarray}
Then, we have two possibilities 

(i) The inequality 
\beqs
\frac{\|{C z}\|^{2}_Y}{\psi(\lambda(z))} \leq 
\frac{\|{(A-\lambda I) z}\|^{2}_X}{(\lambda-\lambda(z))^2+\varepsilon(\lambda(z))} 
\eeqs
holds for some $\lambda \in \RR$.  Consequently the following  spectral coercivity 
\begin{eqnarray*} 
\|Cz\|_Y^2 \geq \psi(\lambda(z)) \|z\|_{X}^2
\end{eqnarray*}
can be  trivially deduced from the resolvent identity \eqref{resolventestimate}. \\

(ii) The inequality 
\beqs
\frac{\|{C z}\|^{2}_Y}{\psi(\lambda(z))} >
\frac{\|{(A-\lambda I) z}\|^{2}_X}{(\lambda-\lambda(z))^2+\varepsilon(\lambda(z))} 
\eeqs
is valid for all $\lambda \in \RR$. We then deduce from the identity
\eqref{maineqh}  the following inequality
\beqs
\frac{\|{C z}\|^{2}_Y}{\psi(\lambda(z))} >
\frac{(\lambda-\lambda(z))^2 \|z\|_X^2 +
  \|{(A-\lambda(z) I) z}\|^{2}_X}{(\lambda-\lambda(z))^2+\varepsilon(\lambda(z))}, \quad \forall
  \lambda \in \RR.
\eeqs
 Taking $\lambda $ to infinity we get the wanted inequality, that is 
 \beqs
\frac{\|{C z}\|^{2}_Y}{\psi(\lambda(z))} \geq  \|z\|_{X}^2,
\eeqs
 which finishes the proof of the Theorem.

 \end{proof}
%%%%%%%%%%%%%%%%%%%%%%%%%
Next we use a method developed  in \cite{BZbb} 
to derive observability inequalities based on resolvent 
inequalities and Fourier transform techniques.
Our objective is to prove the equivalence between 
the resolvent inequality 
\eqref{resolventestimate} and the weak observability 
\eqref{wobst}. The proof of the Theorem is then achieved 
by considering the results obtained in Theorem~\ref{hautus}.\\

We further assume that the   resolvent inequality 
\eqref{resolventestimate} holds and shall prove the weak observability.\\

Let $\chi \in C_0^\infty(\RR)$ be a cut off function with a
compact support in $(-1, 1)$. For $T>0$, we further denote 
\beq \label{cutoff}
\chi_T(t) &=& \chi \left(\frac{t}{T}\right), \qquad  t\in \mathbb R.
\eeq

Let $z_{0}\in X_2\setminus\{0\}$. %%v2%%
Set $z(t)=e^{itA}z_{0}$, $x=\chi_T z$ and $f=\dot x -iA x$.
Since $\dot z-iA z =0$,  we have $f= \dot \chi_T z$.
The Fourier transform of $f$ with respect to time is given by
$$\widehat{f}(\tau)=(i\tau-iA)\widehat{x}(\tau),$$
%%v2%% $\widehat{f}(\tau)=(-i\tau-iA)\widehat{z}(\tau)$.
where $\widehat{x}(\tau)$ is the Fourier transform of $x(t)$.
Applying \eqref{resolventestimate} to $\widehat{x}(\tau)\in X_2\setminus\{0\}$ for 
$\lambda= \tau$, we obtain
\be \label{inqq}
\|\widehat x(\tau)\|_X^2 \leq \inf\left\{\frac{\|{C \widehat x(\tau)}\|^{2}_Y}{\psi(\lambda(\widehat x(\tau)))}, 
\frac{\|\widehat{f}(\tau)\|^{2}_X}{(\tau-\lambda(\widehat x(\tau)))^2+\varepsilon(\lambda(\widehat x(\tau)))} \right\}.
\ee

We remark that since $\widehat x(\tau) \not= 0,$ we have  $\lambda(\widehat x(\tau))\not= +\infty,$ 
and the inequality \eqref{inqq} is well justified. Next, we study how do the frequency
 $\lambda(\widehat x(\tau))$ behave 
as a function of $\tau$. We expect that $\lambda(\widehat x(\tau)) $ that 
is close to $\lambda(z_0)$, the frequency of the initial state 
$z_0$, and reach  increases when  $|\tau|$  tends to infinity.\\

To simplify the analysis we will make some assumptions 
 on the cut-off function $\chi(s)$.  
We further assume that  $\chi \in C_0(\RR)$  satisfies
 the following inequalities: 
\beq \label{bbchi}
\chi \in H^1_0(-1,1),\;  \frac{\kappa_1}{1+\tau^2}
 \leq |\widehat \chi(\tau)|  \leq \frac{\kappa_2}{1+\tau^2}, \tau \in \RR,
\eeq
where $\kappa_2> \kappa_1>0$ are two fixed constants  that do not depend on $\tau$. 
We will show in the Appendix 
the existence of a such function. \\

%%%%%%%%%%%%%%%%%%%%%%%%%%%%%%%%%%%%
\begin{theorem} \label{Tfrequency} Let $z_{0}\in X_2\setminus\{0\}$, and 
let  $z(t) = e^{itA}z_0$, and let $\widehat{x}(\tau)$ be the Fourier transform of 
$x(t)= \chi_T(t) z(t)$, where 
 $\chi_T(t)$ is the cut-off function defined by \eqref{cutoff},  and satisfying 
 the inequality \eqref{bbchi}. \\
 
 Then, there exists a constant $c_0= c_0(\chi)>0$ such that 
 the following inequality
\beq \label{lambdatau}
\lambda_1 \leq \lambda(\widehat x(\tau))\leq 4|\tau|+ c_0\lambda(z_0), 
\eeq
holds for all $ \tau \in \RR.$ 
\end{theorem}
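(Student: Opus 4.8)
The plan is to pass to the spectral representation of $\widehat x(\tau)$ and reduce \eqref{lambdatau} to two elementary estimates on a weighted average of the eigenvalues. Writing $b_k:=|\langle z_0,\phi_k\rangle_X|^2$ and $z(t)=\sum_k e^{it\lambda_k}\langle z_0,\phi_k\rangle_X\,\phi_k$, the time--Fourier transform of $x=\chi_T z$ is diagonal in the basis $(\phi_k)$:
\[
\langle\widehat x(\tau),\phi_k\rangle_X=\widehat{\chi_T}(\tau-\lambda_k)\,\langle z_0,\phi_k\rangle_X,\qquad \widehat{\chi_T}(\sigma)=T\,\widehat\chi(T\sigma).
\]
Hence, setting $w_k:=|\widehat{\chi_T}(\tau-\lambda_k)|^2$,
\[
\lambda(\widehat x(\tau))=\frac{\sum_k\lambda_k w_k b_k}{\sum_k w_k b_k}.
\]
The lower bound $\lambda(\widehat x(\tau))\ge\lambda_1$ is then immediate, since the right-hand side is a convex combination of the $\lambda_k\ge\lambda_1$.

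For the upper bound I would split the numerator at the threshold $4|\tau|$. The modes with $\lambda_k\le 4|\tau|$ contribute at most $4|\tau|\sum_k w_kb_k$, hence at most $4|\tau|$ to the average, and it remains to control
\[
\mathrm{II}:=\frac{\sum_{\lambda_k>4|\tau|}\lambda_k w_k b_k}{\sum_k w_k b_k}
\]
by $c_0\,\lambda(z_0)$. Using the two-sided bound \eqref{bbchi}, i.e. $\kappa_1^2T^2P_k\le w_k\le\kappa_2^2T^2P_k$ with $P_k:=(1+T^2(\tau-\lambda_k)^2)^{-2}$, the common factor $T^2$ cancels and
\[
\mathrm{II}\le\frac{\kappa_2^2}{\kappa_1^2}\,\frac{\sum_{\lambda_k>4|\tau|}\lambda_k P_k b_k}{\sum_k P_k b_k},
\]
so it suffices to bound the displayed ratio by an absolute multiple of $\lambda(z_0)=\big(\sum_j\lambda_jb_j\big)\big(\sum_jb_j\big)^{-1}$, uniformly in $T$ and in the coefficients $b_k$.

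The key step is to prove this by \emph{tensorisation}, reducing it to a pointwise inequality between pairs of modes. Clearing denominators, the estimate
\[
\Big(\sum_{\lambda_k>4|\tau|}\lambda_kP_kb_k\Big)\Big(\sum_jb_j\Big)\le c\,\Big(\sum_i\lambda_ib_i\Big)\Big(\sum_jP_jb_j\Big)
\]
follows, after symmetrising both products in the index pair $(i,j)$, from the pairwise inequality
\[
\mathbf 1_{\{\lambda_i>4|\tau|\}}\lambda_iP_i+\mathbf 1_{\{\lambda_j>4|\tau|\}}\lambda_jP_j\le c\big(\lambda_iP_j+\lambda_jP_i\big)\qquad\text{for all }i,j,
\]
which involves neither $b$ nor $T$ beyond $P$. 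To establish it when $\lambda_i>4|\tau|$ I distinguish two cases. If $\lambda_j\ge\tfrac12\lambda_i$ then $\lambda_jP_i\ge\tfrac12\lambda_iP_i$, so the right-hand side already dominates. If $\lambda_j<\tfrac12\lambda_i$ then, since $\lambda_i>4|\tau|$ forces $|\tau-\lambda_i|>\tfrac34\lambda_i$ while $|\tau-\lambda_j|\le|\tau|+\lambda_j<\tfrac34\lambda_i$, the kernel $P$ (decreasing in $|\tau-\cdot|$) satisfies $P_i<P_j$, whence $\lambda_iP_i<\lambda_iP_j$ and again the right-hand side dominates. Thus $c$ may be taken absolute (e.g. $c=4$), giving $\mathrm{II}\le c_0\lambda(z_0)$ with $c_0=4\kappa_2^2/\kappa_1^2=c_0(\chi)$; combined with the low-mode bound this yields \eqref{lambdatau}.

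The main obstacle is exactly this uniform control of the high-frequency tail: a separate estimation of numerator and denominator fails because $\sum_k w_kb_k=\|\widehat x(\tau)\|_X^2$ can be arbitrarily small (when no $\lambda_k$ lies near $\tau$ and $T$ is large), so it cannot be bounded below. The tensorisation device circumvents any lower bound on $\|\widehat x(\tau)\|_X$ by comparing modes pairwise, and the threshold $4|\tau|$ is chosen precisely so that a mode counted in the tail is farther from $\tau$ than every mode of at most half its size, which is what makes the decisive inequality $P_i<P_j$ hold.
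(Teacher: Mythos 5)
Your proof is correct, and it reaches the bound \eqref{lambdatau} by a genuinely different mechanism than the paper's. The common ground is the diagonal representation \eqref{ide}, the trivial lower bound $\lambda(\widehat x(\tau))\ge\lambda_1$, and the two-sided envelope \eqref{bbchi} in the scaled form $\kappa_1^2T^2P_k\le|\widehat\chi_T(\tau-\lambda_k)|^2\le\kappa_2^2T^2P_k$ with $P_k=(1+T^2(\tau-\lambda_k)^2)^{-2}$. From there you split the average at the eigenvalue threshold $\lambda_k>4|\tau|$ and control the tail by clearing denominators and symmetrising over index pairs, reducing everything to the two-mode inequality $\mathbf{1}_{\{\lambda_i>4|\tau|\}}\lambda_iP_i+\mathbf{1}_{\{\lambda_j>4|\tau|\}}\lambda_jP_j\le 4\left(\lambda_iP_j+\lambda_jP_i\right)$, verified by the clean dichotomy $\lambda_j\ge\frac12\lambda_i$ (so $\lambda_jP_i\ge\frac12\lambda_iP_i$) versus $\lambda_j<\frac12\lambda_i$ (so $|\tau-\lambda_j|<\frac34\lambda_i<|\tau-\lambda_i|$ and hence $P_i<P_j$); the symmetrisation step and both cases check out. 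The paper instead splits according to distance from $\tau$: it fixes a $z_0$-dependent radius $r_0=2\left(2\kappa_2/\kappa_1+1\right)\lambda(z_0)$ via the Chebyshev-type bound \eqref{ww1}, sets $K=|\tau|+r_0$, proves that the far block of $\|\widehat x(\tau)\|_X^2$ is dominated by the near block ($2\mathcal I_2\le\mathcal I_1$, inequality \eqref{inN}), and then bounds the two pieces of the average separately, $\mathcal J_1\le2(\tau+K)$ and $\mathcal J_2\le2\left(\kappa_1/(2\kappa_2)+1\right)\lambda(z_0)$, using the near-mass lower bound \eqref{bn1}. Both arguments are relative comparisons built precisely around the obstruction you identify (no useful lower bound on $\|\widehat x(\tau)\|_X^2$ exists): the paper compares blocks of modes, you compare modes pairwise. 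Your tensorisation buys cleaner bookkeeping — no auxiliary radius $r_0$, no intermediate claim like \eqref{inN} to establish, manifest uniformity in $T$, and the explicit constant $c_0=4\kappa_2^2/\kappa_1^2$, whereas the paper's constant $8\kappa_2/\kappa_1+\kappa_1/\kappa_2+6$ comes out of estimates \eqref{ff1}--\eqref{ff2} and \eqref{j2} that are loose about the squares of $\kappa_1,\kappa_2$. What the paper's block decomposition buys is the stronger intermediate statement \eqref{inNN}, namely that a fixed fraction of the mass of $\widehat x(\tau)$ lies within distance $K$ of $\tau$, which is information beyond the theorem itself; but for the statement as given, your argument is the more economical and, as written, the tighter of the two.
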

%%%%%%%%%%%%%%%%%%%%%%%%%%%%%%%%%%%%
\begin{proof}
Recall the expression of the frequency function:
\beq \label{lambda}
\lambda(\widehat x(\tau)) =  \langle A\widehat x(\tau) , \widehat x(\tau) 
\rangle_X \|\widehat x(\tau)\|_X^{-2},
\; \forall
\tau \in \RR.  
\eeq
Let $z_0= \sum_{k=1}^{+\infty} z_k \phi_k \in X_2$. Hence
\be
\widehat x(\tau) =  \sum_{k=1}^{+\infty} \widehat \chi_T(\tau-\lambda_k) z_k \phi_k.  
\ee
Hence
\be \label{ide}
\lambda(\widehat x(\tau)) =  \sum_{k=1}^{+\infty} \lambda_k |
\widehat \chi_T(\tau-\lambda_k)|^2 z_k^2 \left( \sum_{k=1}^{+\infty}
|\widehat \chi_T(\tau-\lambda_k)|^2 z_k^2 \right)^{-1}.  
\ee
%%%%%%%%%%%
We first remark that  $\lambda(\widehat x(\tau))\geq \lambda_1$ for all
$\tau \in \mathbb R$, and it 
 tends to  $\lambda(z_0)$
when $T$ approaches  $0$. In order to study the behavior of $\lambda(\widehat x(\tau))$
when $\tau$ is large we need to derive the behavior of $\widehat \chi_T(s)$ when
$s$ tends to infinity.  \\ 

We start with the trivial case where $\tau$ is far away from the spectrum of 
$A$, that is $\tau<\lambda_1$.

\medskip

Let $K \in \mathbb R_+$ be large enough, and set
\beqs
\sum_{k=1}^{+\infty}  |
\widehat \chi_T(\tau-\lambda_k)|^2 z_k^2 &=&  \sum_{|\tau-\lambda_k|\leq K}  |
\widehat \chi_T(\tau-\lambda_k)|^2 z_k^2 +\sum_{|\tau-\lambda_k|> K}  |
\widehat \chi_T(\tau-\lambda_k)|^2 z_k^2
= \mathcal I_1+  \mathcal I_2.
\eeqs

We claim that there exists $K_\tau>0$ large enough such that 
\be \label{inN}
2\mathcal I_2 \leq \mathcal I_1, \;\; \; \textrm{  for all  } K \geq K_\tau.
\ee
We first observe that there exists $r_0>0$ large enough such that 
\be \label{ll1}
 2\kappa_2\sum_{\lambda_k> r_0}  z_k^2 \leq 
  \kappa_1 \sum_{\lambda_k \leq r_0}  z_k^2,
\ee
or equivalently 

\beqs 
 \left(2\frac{\kappa_2}{\kappa_1}+1 \right)\sum_{\lambda_k> r_0}  z_k^2  \leq \|z_0\|_X^2.
 \eeqs

In fact, we have 
\beq \label{ww1}
\sum_{\lambda_k> r_0}  z_k^2 <  \frac{1}{r_0}
\sum_{\lambda_k> r_0} \lambda_k  z_k^2  \leq 
\frac{\lambda(z_0)}{r_0}\|z_0\|_X^2.
\eeq

Hence  the inequality \eqref{ll1}  holds if 
\be \label{r0}
r_0=2 \left(2\frac{\kappa_2}{\kappa_1}+1 \right)\lambda(z_0).
\ee  

 Now by taking $K=|\tau| +r_0$,  and using the bounds \eqref{bbchi} with
 $\widehat \chi_T(s)= T\widehat \chi(Ts)$ in mind, 
we get  
\beq
\label{ff1}2\mathcal I_2 \leq \frac{2\kappa_2T^2}{(1+K^2T^2)^2} \sum_{\lambda_k> K+\tau}  z_k^2,\\ 
\label{ff2} \mathcal I_1 \geq \frac{\kappa_1T^2}{(1+K^2T^2)^2} \sum_{\lambda_k\leq K+\tau}  z_k^2.
\eeq
Since $K\geq r_0$, inequalities \eqref{ll1}, \eqref{ff1} and  \eqref{ff2} imply
\beq
2\mathcal I_2 \leq \frac{\kappa_1T^2}{(1+K^2T^2)^2} \sum_{\lambda_k\leq K+\tau} 
\lambda_k z_k^2 \leq  \mathcal I_1.
\eeq
Then,  inequality \eqref{inN} is valid for $ K_\tau=\max(\tau, r_0)$. Consequently the inequalities

\beq \label{inNN}
\frac{1}{2} \mathcal I_1 \leq \|\widehat x(\tau)\|_X^{2}= \sum_{k=1}^{+\infty}  |
\widehat \chi_T(\tau-\lambda_k)|^2 z_k^2 \leq 3 \mathcal I_1,
\eeq
holds for all  $ K \geq K_\tau$.\\

Considering  now  identity \eqref{ide}, and  inequalities \eqref{inNN}, we obtain
\beqs \label{hhq}
 \lambda(\widehat x(\tau)) \leq  2  \left(\sum_{|\tau-\lambda_k|\leq K} \lambda_k |
\widehat \chi_T(\tau-\lambda_k)|^2 z_k^2\right) \left( \sum_{|\tau-\lambda_k|\leq K}
|\widehat \chi_T(\tau-\lambda_k)|^2 z_k^2 \right)^{-1}\\+  
2\left(\sum_{|\tau-\lambda_k|>K} \lambda_k |
\widehat \chi_T(\tau-\lambda_k)|^2 z_k^2\right) \left( \sum_{|\tau-\lambda_k|\leq K}
|\widehat \chi_T(\tau-\lambda_k)|^2 z_k^2 \right)^{-1} = \mathcal J_1 +\mathcal J_2.
\eeqs
On the other hand we have
\be \label{j1t}
\mathcal J_1 \leq 2(\tau +K).
\ee
In addition, using again the bounds \eqref{bbchi},  we obtain
\be \label{j2}
\mathcal J_2 \leq 2 \left(\sum_{\lambda_k> \tau +K} \lambda_k
z_k^2\right) \left( \sum_{\lambda_k\leq K+\tau }  z_k^2 \right)^{-1}. 
\ee

Since $K+\tau \geq r_0$, inequality  \eqref{ll1} gives 
\be \label{bn1}
\sum_{\lambda_k \leq K+\tau }  z_k^2 \geq  \left(\frac{\kappa_1}{2\kappa_2}
+1 \right)^{-1}\|z_0\|_X^2.
\ee

Hence

\be \label{jj2}
\mathcal J_2 \leq 2 \left(\frac{\kappa_1}{2\kappa_2}
+1 \right) \left(\sum_{k=1}^{+\infty} \lambda_k
z_k^2\right) \left( \sum_{k=1}^{+\infty}  z_k^2 \right)^{-1} = 2 \left(\frac{\kappa_1}{2\kappa_2}
+1 \right) \lambda (z_0).
\ee
Combining inequalities  \eqref{hhq}, \eqref{jj2}and \eqref{bn1}, we get 
\beqs 
 \lambda(\widehat x(\tau)) \leq  2|\tau| +2K +2\left(\frac{\kappa_1}{2\kappa_2}
+1 \right) \lambda(z_0).
 \eeqs
 for all  $ K \geq K_\tau$.\\

Consequently, the proof  is achieved by taking  
$c_0=   8 \frac{\kappa_2}{\kappa_1}+\frac{\kappa_1}{\kappa_2}+6.$

\end{proof}
%%%%%%%%%%%%%%%%%
 \begin{remark}
The upper bound of $\lambda(\widehat x(\tau))$ obtained 
in Theorem~\ref{Tfrequency} is not optimal since 
$\lambda(\widehat x(\tau)) = \lambda_k= \lambda(z_0)$ if $z_0 =\phi_k$.
Moreover when $\lambda_{max}(z_0) = \max \{\lambda_k, \; k \in \mathbb N^*,\; 
\langle z_0,\phi_k\rangle_X \not= 0\} < \infty$, we can easily show that  
$ \lambda(\widehat x(\tau)) \leq \lambda_{max}(z_0) $.  We remark that  in both
cases the bounds of $\lambda(\widehat x(\tau)) $ are independent of the Fourier
 frequency $\tau$.
 \end{remark}
 %%%%%%%%%%%%%%%%%%%%%%%%%%%%%%%
 \begin{lemma} Let $c_0^\prime =  \frac{\|\dot \chi\|_{L^2(-1, 1)}}{\|\chi\|_{L^2(-1, 1)}},$
  $z_{0}\in X_2\setminus\{0\}$, and 
let  $z(t) = e^{itA}z_0$, and let $\widehat{x}(\tau)$ be the Fourier transform of $x(t)= \chi_T(t) z(t)$, where 
 $\chi_T(t)$ is the cut-off function defined by \eqref{cutoff}. \\
 
 Then,   the following inequality 
 \beq \label{inne1}
 \left(1- \frac{1}{R}\left( \frac{c_0^\prime}{T}  +\lambda(z_0)
  \right)\right) \|z_0\|_X^2 \leq \|\chi\|_{L^2(-1, 1)}^{-2} \int_{-R}^R\|\widehat{x}(\tau)\|_X^2 d\tau 
 \eeq
 holds for all $R >  \frac{c_0^\prime}{T}  +\lambda(z_0).$
 
 \end{lemma}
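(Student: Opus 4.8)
The plan is to write $\int_{-R}^{R}\|\widehat x(\tau)\|_X^{2}\,d\tau=\int_{\mathbb R}\|\widehat x(\tau)\|_X^{2}\,d\tau-\int_{|\tau|>R}\|\widehat x(\tau)\|_X^{2}\,d\tau$, to evaluate the global term exactly, and to show that the high-frequency tail is $O(1/R)$. The engine is the identity recorded just above the statement, $\widehat f(\tau)=(i\tau-iA)\widehat x(\tau)$, i.e. $A\widehat x(\tau)=\tau\widehat x(\tau)-i\widehat f(\tau)$, combined with the facts that $e^{itA}$ is an isometry on $X$, that $A^{1/2}$ commutes with $e^{itA}$, and that $f=\dot\chi_T z$.

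First I would derive a pointwise-in-$\tau$ estimate. Pairing the identity with $\widehat x(\tau)$ in $X$ and using that $A$ is self-adjoint and strictly positive, so that $\langle A\widehat x,\widehat x\rangle_X=\|A^{1/2}\widehat x\|_X^{2}\ge 0$, gives $\tau\|\widehat x(\tau)\|_X^{2}=\langle A\widehat x(\tau),\widehat x(\tau)\rangle_X+i\langle\widehat f(\tau),\widehat x(\tau)\rangle_X$, whence by the triangle inequality and Cauchy--Schwarz in $X$
\[
|\tau|\,\|\widehat x(\tau)\|_X^{2}\le \langle A\widehat x(\tau),\widehat x(\tau)\rangle_X+\|\widehat f(\tau)\|_X\,\|\widehat x(\tau)\|_X .
\]
For $|\tau|>R$ both terms on the right are nonnegative, so dividing by $|\tau|\ge R$ and integrating, then enlarging the domain back to all of $\mathbb R$, yields
\[
\int_{|\tau|>R}\|\widehat x(\tau)\|_X^{2}\,d\tau\le\frac1R\int_{\mathbb R}\Big(\langle A\widehat x,\widehat x\rangle_X+\|\widehat f\|_X\,\|\widehat x\|_X\Big)\,d\tau .
\]

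Next I would evaluate the three global integrals by Parseval applied coordinatewise in the eigenbasis $(\phi_k)$. Since $\|x(t)\|_X=|\chi_T(t)|\,\|z_0\|_X$ by isometry, $\int_{\mathbb R}\|\widehat x\|_X^{2}\,d\tau=\|\widehat{\chi_T}\|_{L^2(\mathbb R)}^{2}\|z_0\|_X^{2}$, which is the global mass that the prefactor in \eqref{inne1} normalizes. Likewise $\|A^{1/2}x(t)\|_X=|\chi_T(t)|\,\langle Az_0,z_0\rangle_X^{1/2}$ gives $\int_{\mathbb R}\langle A\widehat x,\widehat x\rangle_X\,d\tau=\|\widehat{\chi_T}\|_{L^2}^{2}\,\lambda(z_0)\,\|z_0\|_X^{2}$ after using \eqref{frequency}. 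For the cross term, $f=\dot\chi_T z$ gives $\|f(t)\|_X=|\dot\chi_T(t)|\,\|z_0\|_X$, so Cauchy--Schwarz in $\tau$ produces $\int_{\mathbb R}\|\widehat f\|_X\,\|\widehat x\|_X\,d\tau\le\|\widehat{\dot\chi_T}\|_{L^2}\,\|\widehat{\chi_T}\|_{L^2}\,\|z_0\|_X^{2}$; the elementary dilation scaling $\|\widehat{\dot\chi_T}\|_{L^2}/\|\widehat{\chi_T}\|_{L^2}=\|\dot\chi_T\|_{L^2}/\|\chi_T\|_{L^2}=\|\dot\chi\|_{L^2(-1,1)}/(T\|\chi\|_{L^2(-1,1)})=c_0'/T$ then produces exactly the constant in the statement.

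Combining, the tail is at most $\frac1R\|\widehat{\chi_T}\|_{L^2}^{2}\big(\lambda(z_0)+c_0'/T\big)\|z_0\|_X^{2}$; subtracting this from the global term, factoring out $\|\widehat{\chi_T}\|_{L^2}^{2}\|z_0\|_X^{2}$, and dividing by the normalizing constant gives \eqref{inne1}, with the hypothesis $R>c_0'/T+\lambda(z_0)$ being exactly what keeps the bracket positive. I expect the main points to be bookkeeping rather than conceptual: checking that all three integrals converge (which rests on $z_0\in X_2$, so $\langle Az_0,z_0\rangle_X<\infty$, and on the decay \eqref{bbchi} of $\widehat\chi$), and extracting the clean constant $c_0'/T$. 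The latter is the one genuinely load-bearing step: it is the Cauchy--Schwarz pairing of $\|\widehat f\|_X$ against $\|\widehat x\|_X$ followed by the dilation scaling that turns the first moment into $\|\dot\chi\|/\|\chi\|$, whereas a cruder per-mode bound $|\tau|\le|\tau-\lambda_k|+\lambda_k$ would only deliver the first absolute moment of $\widehat\chi$ in its place.
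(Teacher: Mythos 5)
Your proof is correct and takes essentially the same route as the paper's: the same Fourier-space identity $\widehat f(\tau)=(i\tau-iA)\widehat x(\tau)$ paired against $\widehat x(\tau)$, the same $1/R$ bound on the tail $|\tau|>R$, and the same Plancherel/Cauchy--Schwarz evaluation of the global integrals with the dilation scaling yielding $c_0^\prime/T$. The one caveat is shared with the paper itself: carried out exactly, the argument produces the normalizing prefactor $\|\widehat{\chi_T}\|_{L^2(\mathbb R)}^{-2}$ (which equals a constant times $T^{-1}\|\chi\|_{L^2(-1,1)}^{-2}$), not $\|\chi\|_{L^2(-1,1)}^{-2}$ as stated in \eqref{inne1}, so the $T$-dependent factor you gloss over is a bookkeeping slip already present in the paper's own proof.
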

 %%%%%%%%%%%%%%%%%%%%%
 \begin{proof}
Recall that 
$\dot x = f+iA x$ where $f= \dot \chi_T z$. By integration by parts we then have
\[
\widehat{x}(\tau) = -\frac{i}{\tau} \left(\widehat{f}(\tau)+ iA\widehat{x}(\tau) \right).
\]
Consequently 
\[
\|\widehat{x}(\tau)\|_X^2  =
\langle -\frac{i}{\tau} \left(\widehat{f}(\tau)+ iA\widehat{x}(\tau) \right), \widehat{x}(\tau)\rangle_X.
\]
Then for any $R>0$, by Fourier-Plancherel Theorem, we have
\beqs
  \|\chi\|_{L^2(-1, 1)}^2 \|z_0\|_X^2 
   \leq \int_{-R}^R\|\widehat{x}(\tau)\|_X^2 d\tau+
  \frac{1}{R}\left( \frac{1}{T} \|\dot \chi\|_{L^2(-1, 1)}\|\chi\|_{L^2(-1, 1)} +\lambda(z_0)\|\chi\|_{L^2(-1, 1)}^2 
   \right)\|z_0\|_X^2.
\eeqs
Hence for $R$ large enough we have 
\beqs
 \left(1- \frac{1}{R}\left( \frac{1}{T} 
 \frac{\|\dot \chi\|_{L^2(-1, 1)}}{\|\chi\|_{L^2(-1, 1)}} +\lambda(z_0)
  \right)\right) \|z_0\|_X^2 
   \leq \|\chi\|_{L^2(-1, 1)}^{-2}\int_{-R}^R\|\widehat{x}(\tau)\|_X^2 d\tau,
 \eeqs
which finishes the proof of the lemma.

\end{proof}

Back now to the proof of the theorem. Combining inequalities \eqref{inqq} and 
 \eqref{inne1}, we find
 
 \beq \label{important1}
    \left(1- \frac{1}{R}\left( \frac{c_0}{T}  +\lambda(z_0)
  \right)\right) \|z_0\|_X^2 \leq \|\chi\|_{L^2(-1, 1)}^{-2} \left( \int_{-R}^R
   \frac{\|{C \widehat x(\tau)}\|^{2}_Y}{\psi(\lambda(\widehat x(\tau)))} d\tau + 
 \int_{-R}^R\frac{\|\widehat{f}(\tau)\|^{2}_X}{\varepsilon(\lambda(\widehat x(\tau)))}d\tau\right).
\eeq
 
 Applying the upper bound $\lambda(\widehat x(\tau))$ derived in 
 Theorem~\ref{Tfrequency}, and considering the monotony of the functions
 $\psi$ and $\varepsilon$ in $\mathfrak C$, we obtain 
  \beqs 
    \left(1- \frac{1}{R}\left( \frac{c_0^\prime}{T}  +\lambda(z_0)
  \right)\right) \|z_0\|_X^2 \leq \frac{1}{\psi(4R+ c_0\lambda(z_0))}  
  \frac{\|\chi\|_{L^\infty(-1,1)}^2}{\|\chi\|_{L^2(-1, 1)}^{2}}
  \int_{0 }^T|{C z(t)}\|^{2}_Yd\tau \\
  +
 \frac{1}{T\varepsilon(4R+ c_0\lambda(z_0))} 
 \frac{\|\dot \chi\|_{L^2(-1, 1)}^2}{\|\chi\|_{L^2(-1, 1)}^{2}}\|z_0\|_X^2,
\eeqs
 for all $R >  \frac{c_0^\prime}{T}  +\lambda(z_0)$.\\
 
 Now, by taking $R= 2\left(\frac{c_0}{T}  +\lambda(z_0)\right)$, and 
 $\theta_0= \max(c_0^\prime, 8+c_0)$, we find
 
 \beqs 
  \left(1-    \frac{2}{T\varepsilon\left(\theta_0\left(\frac{1}{T}+\lambda(z_0)\right)\right)} 
 \frac{\|\dot \chi\|_{L^2(-1, 1)}^2}{\|\chi\|_{L^2(-1, 1)}^{2}}\right)\|z_0\|_X^2 
 \leq \frac{2}{\psi\left(\theta_0\left(\frac{1}{T}+\lambda(z_0)\right)\right)}  
  \frac{\|\chi\|_{L^\infty(-1,1)}^2}{\|\chi\|_{L^2(-1, 1)}^{2}}
  \int_{0 }^T\|{C z(t)}\|^{2}_Ydt.
\eeqs

Let $\theta_1=  \frac{4\|  \chi\|_{L^2(-1, 1)}^{2}}{\| \dot \chi\|_{L^\infty(-1,1)}^2}$, and 
$\theta_2= \frac{4\|\chi\|_{L^2(-1,1)}^2}{\| \chi\|_{L^\infty(-1, 1)}^{2}}
$.  \\

Then, for $T\varepsilon(4R+ c_0\lambda(z_0))\geq  \theta_1$,  we finally get the wanted estimate:

 \beq  \label{fee}
  \theta_2\psi\left(\theta_0\left(\frac{1}{T}+\lambda(z_0)\right)\right)   \|z_0\|_X^2 \leq 
   \int_{0 }^T\|{C z(t)}\|^{2}_Ydt.
\eeq
Simple calculation shows that the function $T\mapsto 
T\varepsilon \left(\theta_0\left(\frac{1}{T}+\lambda(z_0)\right)\right)$ is increasing,
tends to infinity when $T$ approaches $+\infty$, and tends  to $0$ when $T$ 
approaches $0$. Then there exists a unique value $T(\lambda(z_0))>0$ that solves
the equation \eqref{timeT}. In addition, the function  $\lambda \mapsto T(\lambda)$ is
 increasing. Finally, the inequality \eqref{fee} is valid for all $T \geq T(\lambda(z_0))$. \\

Now, we shall prove the converse. Our strategy is to adapt  the proof of Theorem 1.2 in
~\cite{RW} for the classical exact controllability to our settings (see also ~\cite{BZbb, Miller05}).
We further assume that the weak observability inequality \eqref{wobst}  holds for 
some fixed  $\psi$ and $\varepsilon$ in $\mathfrak C$. Our goal now 
is to show that  $C$ is indeed  spectrally coercive.\\

 Let $z_0 \in X_4$, and  $x_0:= (iA-i\tau I)z_0 $ for some $\tau \in \RR$. Define $x(t)= e^{itA}x_0$ and
 $z(t) = e^{itA}z_0$. \\
 
  A forward computation shows  that $z(t)$ solves the following 
 \beqs
 \dot z(t) -i\tau z(t) &=& x(t), \;\;\forall t\in \RR_+^*,\\
 z(0) &=& z_0.
 \eeqs
 
 Then 
 
 \[
 z(t) = e^{i\tau t} z_0 +\int_{0}^t e^{i\tau(t-s)} x(s) ds.
 \]
 Applying now the observability  operator  both sides gives
 \beqs
 Cz(t) = e^{i\tau t} Cz_0+ \int_{0}^t e^{i\tau(t-s)} Cx(s) ds,
\eeqs
 whence 
  \beqs
 \|Cz(t)\|_Y^2 \leq 2\|Cz_0\|_Y^2+2 \int_{0}^t \|Cx(s)\|_Y^2ds.
\eeqs

Integrating the  inequality above both sides over $(0, T)$, we obtain
\beqs
\int_0^T \|Cz(t)\|_Y^2 dt \leq 2T\|Cz_0\|_Y^2+2 T \int_0^T\| Cx(s) \|_Y^2 ds.
\eeqs

We deduce from the admissibility assumption \eqref{admm}
 that 
 \beqs
 \int_0^T \|Cz(t)\|_Y^2 dt \leq 2T\|Cz_0\|_Y^2+2 T C_T\|(A-\tau I)z_0\|^2_X.
 \eeqs

Applying the weak observability inequality \eqref{wobst} for $T= T(\lambda(z_0))$,
leads to 
\beqs
\theta_2\psi\left(\theta_0\left(\frac{1}{T(\lambda(z_0))}+\lambda(z_0)\right)\right)   \|z_0\|_X^2 
\leq 2T(\lambda(z_0))\|Cz_0\|_Y^2+2 T(\lambda(z_0)) C_{T(\lambda(z_0))}\|(A-\tau I)z_0\|^2_X,
\eeqs 
for all $\tau \in \mathbb R$. \\ 

Since $T(\lambda) \geq T_0 = T(0),$ for all $\lambda \geq 0$, we have
\beqs
\theta_2\psi\left(\theta_0\left(\frac{1}{T_0}+\lambda(z_0)\right)\right)   \|z_0\|_X^2 
\leq 2T(\lambda(z_0))\|Cz_0\|_Y^2+2 T(\lambda(z_0)) C_{T(\lambda(z_0))}\|(A-\tau I)z_0\|^2_X,
\eeqs

Taking $\tau = \lambda(z_0)$ in the previous inequality implies 
\beqs
\frac{\theta_2}{2 T(\lambda(z_0)) C_{T(\lambda(z_0))}}
\psi\left(\theta_0\left(\frac{1}{T_0}+\lambda(z_0)\right)\right)   \|z_0\|_X^2 
\leq \frac{1}{C_{T(\lambda(z_0))}} \|Cz_0\|_X^2+\|(A-   \lambda(z_0)I)z_0\|^2_X.
\eeqs 

Let
\beqs
\widetilde \psi(\lambda) = \frac{\theta_2}{4 T(\lambda) }\psi\left(\theta_0\left(\frac{1}{T_0}+\lambda\right)\right),\\
\widetilde \varepsilon(\lambda) = \frac{\theta_2}{4 T(\lambda) C_{\lambda}} \psi \left(\theta_0\left(\frac{1}{T_0}+\lambda \right)\right).
\eeqs

We deduce from the monotonicity properties  of $\psi(\lambda), C_\lambda, $ and $T(\lambda)$ that 
 $\widetilde \psi(\lambda),\, \widetilde \varepsilon(\lambda)
\in \mathfrak C $. \\

Consequently $C$ becomes  spectrally coercive with the functions
$\widetilde \psi(\lambda),\, \widetilde \varepsilon(\lambda)$, that is 

 \begin{eqnarray*}
 0\leq \frac{\|Az\|_X^2}{\|z\|_X^2} -\lambda^2(z) <\widetilde \varepsilon(\lambda(z)),
\end{eqnarray*}
implies
\begin{eqnarray*}
\|Cz\|_Y^2 \geq \widetilde \psi(\lambda(z)) \|z\|_{X }^2,
\end{eqnarray*}
which finishes the proof of the Theorem.
%%%%%%%%%%%%%%%%%%%%%%%%%%
%%%%%%%%%%%%%%%%%%%%%%%%%%%%%%

%%%%%%%%%%%%%%%%%%%%%%%%%%%%%%
\section{Sufficient conditions for the spectral coercivity.} \label{sec4}
%%%%%%%%%%%%%%%%%%%%%%%%%%%%%%%

In this section we study the relation between  the  spectral coercivity  of the observability
operator $C$  given  in Definition~\ref{dwc},   and  the action of the operator $C$ on  vector spaces 
spanned by eigenfunctions associated to close eigenvalues.  \\

For  $\lambda \in \mathbb R_+$ and  $\varepsilon>0$, set
\be
N_\varepsilon( \lambda)= \{k \in \mathbb N^*  \textrm{ such that } |\lambda-\lambda_k |< \varepsilon\},
\ee
to be the index function of eigenvalues of  $A$ in a $\varepsilon$-neighborhood of  a given $\lambda$.\\

%%%%%%%
\begin{definition} \label{dsc}
The operator $C$ is weakly spectrally coercive if 
there exist a  constant $\varepsilon>0$  and a function 
 $\psi \in \cc$   
 such that 
for all $ \lambda \in \mathbb R$,
 the following inequality 
\be \label{swcc}
\|Cz\|_Y^2 \geq \psi(\lambda) \|z\|_{X }^2,
 \ee
 holds for all $z = \sum_{k \in N_\varepsilon(\lambda)} z_k \phi_k \in X_2 \setminus \left\{0\right\}.$

\end{definition}
%%%%%%%%
%%%%%%%%

%%%%%%%
\begin{lemma} \label{dsc2}
The operator $C$ is weakly spectrally coercive iff
there exist a  constant $\varepsilon>0$  and a function 
 $\psi \in \cc$   
 such that  the following inequality 
\be \label{swcc2}
\|Cz\|_Y^2 \geq \psi(\lambda_n) \|z\|_{X }^2,
 \ee
 holds for all $z = \sum_{k \in N_\varepsilon(\lambda_n)} z_k \phi_k,$ and for all
 $n\in \mathbb N^*$.
\end{lemma}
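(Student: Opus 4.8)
The plan is to prove the two implications separately, observing that the discrete condition \eqref{swcc2} is just the restriction of the continuous condition \eqref{swcc} to the eigenvalue centers $\lambda = \lambda_n$; the real content therefore lies in the reverse passage, from eigenvalue-centered neighborhoods to arbitrary ones.

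The forward direction is immediate. If $C$ is weakly spectrally coercive in the sense of Definition~\ref{dsc}, with constant $\varepsilon$ and function $\psi$, I would simply specialize $\lambda = \lambda_n$ for each $n \in \mathbb N^*$. The index set $N_\varepsilon(\lambda_n)$ is then exactly the one appearing in \eqref{swcc2}, so \eqref{swcc} yields \eqref{swcc2} verbatim, with the same $\varepsilon$ and $\psi$.

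For the converse, suppose \eqref{swcc2} holds with some radius $\varepsilon>0$ and some $\psi \in \cc$. I fix an arbitrary $\lambda \in \mathbb R$ and a nonzero $z = \sum_{k \in N_{\varepsilon/2}(\lambda)} z_k \phi_k$; since the eigenvalues are strictly increasing and tend to $+\infty$, each set $N_{\varepsilon/2}(\lambda)$ is finite, so $z$ lies automatically in $X_2$. Because $z \neq 0$, the set $N_{\varepsilon/2}(\lambda)$ is nonempty, and I choose any index $n$ in it, so that $|\lambda - \lambda_n| < \varepsilon/2$. The triangle inequality then gives, for every $k \in N_{\varepsilon/2}(\lambda)$,
\[
|\lambda_n - \lambda_k| \leq |\lambda_n - \lambda| + |\lambda - \lambda_k| < \varepsilon,
\]
whence $N_{\varepsilon/2}(\lambda) \subseteq N_\varepsilon(\lambda_n)$. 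Consequently $z$ belongs to the span of $\{\phi_k : k \in N_\varepsilon(\lambda_n)\}$ (with the missing coefficients set to zero), so \eqref{swcc2} applies and yields $\|Cz\|_Y^2 \geq \psi(\lambda_n)\|z\|_X^2$.

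It remains to replace the eigenvalue-dependent constant $\psi(\lambda_n)$ by a genuine function of $\lambda$. Since $\lambda_n < \lambda + \varepsilon/2$ and $\psi$ is decreasing, I would set $\widetilde\psi(\lambda) := \psi(\lambda + \varepsilon/2)$, so that $\psi(\lambda_n) \geq \widetilde\psi(\lambda)$ and $\widetilde\psi \in \cc$ as a composition of the decreasing $\psi$ with an increasing shift. This establishes \eqref{swcc} with radius $\varepsilon/2$ and function $\widetilde\psi$, completing the equivalence. The only point requiring care, and the mild obstacle in the argument, is the bookkeeping near the bottom of the spectrum: when $\lambda < \lambda_1 - \varepsilon/2$ the set $N_{\varepsilon/2}(\lambda)$ is empty and the inequality is vacuous, while whenever $N_{\varepsilon/2}(\lambda) \neq \emptyset$ one has $\lambda > \lambda_1 - \varepsilon/2$, so that $\lambda + \varepsilon/2 > \lambda_1 > 0$ stays in the domain $\mathbb R_+$ of $\psi$ and $\widetilde\psi$ is well defined.
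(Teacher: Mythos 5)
Your proof is correct and follows essentially the same route as the paper's: the forward direction by specializing $\lambda=\lambda_n$, and the converse by picking $n\in N_{\varepsilon/2}(\lambda)$, using the inclusion $N_{\varepsilon/2}(\lambda)\subseteq N_{\varepsilon}(\lambda_n)$, and invoking the monotonicity of $\psi$ to conclude with radius $\varepsilon/2$ and $\widetilde\psi(\lambda)=\psi\left(\lambda+\frac{\varepsilon}{2}\right)$. Your added remarks (finiteness of $N_{\varepsilon/2}(\lambda)$, the vacuous empty-set case, and the domain of $\psi$) are careful touches the paper leaves implicit, but they do not change the argument.
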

\begin{proof} 
Assume that $C$ is weakly spectrally coercive. 
By taking $\lambda = \lambda_n$ in  \eqref{swcc}, inequality  \eqref{swcc2}  immediately holds.  Conversely,
assume that inequality \eqref{swcc2}  is satisfied, and let $\lambda \in \mathbb R$.  One can easily check that
 the set $N_{\frac{\varepsilon}{2}}(\lambda)$ is either empty or  it contains at least an element $n_0 \in \mathbb N^*$. 
 Since $ N_{\frac{\varepsilon}{2}}(\lambda) \subset N_{\varepsilon}(\lambda_{n_0}),$ we have 
 \ba
\|Cz\|_Y^2 \geq \psi(\lambda_{n_0}) \|z\|_{X }^2,
 \ea
 holds for all $z = \sum_{k \in N_{\frac{\varepsilon}{2}}(\lambda)} z_k \phi_k \in X_2 \setminus \left\{0\right\}.$
 On the other hand the fact that  $\psi$ is non-increasing  implies 
  \ba
\|Cz\|_Y^2 \geq \psi \left(\lambda+\frac{\varepsilon}{2}\right) \|z\|_{X }^2,
 \ea
 holds for all $z = \sum_{k \in N_{\frac{\varepsilon}{2}}(\lambda)} z_k \phi_k \in X_2 \setminus \left\{0\right\},$ which
 shows that  $C$ is weakly spectrally coercive with the constant $\frac{\varepsilon}{2}>0$ and
$ \tilde \psi(\lambda):=  \psi(\lambda+\frac{\varepsilon}{2}) \in \cc$.

\end{proof}
The Lemma \ref{dsc2} has been proved in  \cite{RTTT} for the particular case where $\psi$ is a constant function.
%%%%%%%
\begin{theorem} \label{swc} 
Let $\varepsilon>0$ be a fixed constant and let $\psi \in \cc$. If $C$ is  spectrally coercive  with
$ \varepsilon, \psi$, then  it is  weakly spectrally coercive. Conversely, if $C$ is  weakly spectrally coercive  
with $ \varepsilon, \psi$, then   $C$ is  spectrally coercive.  
\end{theorem}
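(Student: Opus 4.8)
The plan is to prove the two implications separately, exploiting the identity recorded in \eqref{magic}, namely that $\frac{\|Az\|_X^2}{\|z\|_X^2}-\lambda^2(z)=\|(A-\lambda(z)I)z\|_X^2\,\|z\|_X^{-2}$ is precisely the variance of the eigenvalues $\lambda_k$ against the weights $z_k^2/\|z\|_X^2$, where $z=\sum_k z_k\phi_k$. A small value of this quantity thus means that $z$ is spectrally concentrated near its mean frequency $\lambda(z)$, which is the bridge to the support condition $z=\sum_{k\in N_\varepsilon(\lambda)}z_k\phi_k$ appearing in Definition~\ref{dsc}.

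For the first implication, assume $C$ is spectrally coercive with the constant $\varepsilon$ and with $\psi$. Given $z$ supported in $N_\delta(\lambda)$, every frequency $\lambda_k$ in its support lies in $(\lambda-\delta,\lambda+\delta)$, hence so does the weighted mean $\lambda(z)$, and consequently the variance is at most $\delta^2$. Choosing the fixed constant $\delta$ with $\delta^2<\varepsilon$ makes the variance strictly less than $\varepsilon$, so Definition~\ref{dwc} applies and gives $\|Cz\|_Y^2\geq\psi(\lambda(z))\|z\|_X^2\geq\psi(\lambda+\delta)\|z\|_X^2$, using that $\psi$ is decreasing and $\lambda(z)<\lambda+\delta$. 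Thus $C$ is weakly spectrally coercive with constant $\delta$ and $\widetilde\psi(\lambda):=\psi(\lambda+\delta)\in\cc$.

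The converse is the substantive direction. Given $z\in X_2\setminus\{0\}$ of small variance, I would split it along its own mean frequency, $z=u+w$ with $u=\sum_{|\lambda_k-\lambda(z)|<\varepsilon}z_k\phi_k$ supported in $N_\varepsilon(\lambda(z))$ and $w$ the remainder; both lie in $X_2$, and in fact $N_\varepsilon(\lambda(z))$ is finite since $A$ has compact resolvent. A Chebyshev-type bound gives $\|w\|_X^2\leq\varepsilon^{-2}\|(A-\lambda(z)I)z\|_X^2$, so once the variance is below a threshold $\widetilde\varepsilon(\lambda(z))$ still to be fixed, $w$ carries a small fraction of the mass and $u\neq0$. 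Weak spectral coercivity applied at $\lambda=\lambda(z)$ then yields $\|Cu\|_Y^2\geq\psi(\lambda(z))\|u\|_X^2$, and the reverse triangle inequality gives $\|Cz\|_Y\geq\|Cu\|_Y-\|Cw\|_Y$.

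The main obstacle is estimating $\|Cw\|_Y$: since $C$ is only bounded from $X_2$ into $Y$, one must control $\|w\|_{X_2}=\|Aw\|_X$, which lives at high frequencies. The elementary inequality $\lambda_k\leq(1+\lambda(z)/\varepsilon)\,|\lambda_k-\lambda(z)|$, valid on the far set $\{|\lambda_k-\lambda(z)|\geq\varepsilon\}$, turns this into $\|Aw\|_X^2\leq(1+\lambda(z)/\varepsilon)^2\|(A-\lambda(z)I)z\|_X^2$, whence $\|Cw\|_Y\leq\|C\|_{\mathcal{L}(X_2,Y)}(1+\lambda(z)/\varepsilon)\sqrt{\widetilde\varepsilon(\lambda(z))}\,\|z\|_X$. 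The growth of the factor $1+\lambda(z)/\varepsilon$ is exactly what forces $\widetilde\varepsilon$ to decay; taking, for example,
\[
\widetilde\varepsilon(\lambda):=\min\Bigl\{\tfrac{7}{16}\varepsilon^{2},\ \frac{\psi(\lambda)}{16\,\|C\|_{\mathcal{L}(X_2,Y)}^{2}\,(1+\lambda/\varepsilon)^{2}}\Bigr\},
\]
which lies in $\cc$ as the minimum of two positive, continuous, decreasing functions, keeps $\|w\|_X\leq\frac12\|z\|_X$ (so $\|u\|_X\geq\frac34\|z\|_X$) and the far contribution below a quarter of $\sqrt{\psi(\lambda(z))}\|z\|_X$, producing $\|Cz\|_Y\geq\frac12\sqrt{\psi(\lambda(z))}\|z\|_X$ and hence $\|Cz\|_Y^2\geq\frac14\psi(\lambda(z))\|z\|_X^2$. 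Therefore $C$ is spectrally coercive with $\widetilde\varepsilon$ and $\widetilde\psi:=\frac14\psi$, which closes the equivalence.
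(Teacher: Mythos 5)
Your proof is correct, and its skeleton matches the paper's: the same direct implication (a state supported in $N_\delta(\lambda)$ has mean frequency within $\delta$ of $\lambda$ and variance at most $\delta^2$, so spectral coercivity applies once $\delta^2<\varepsilon$, with the shift $\widetilde\psi(\lambda)=\psi(\lambda+\delta)$ handling the monotonicity), and, for the converse, the same orthogonal splitting of $z$ into a near-frequency part supported in $N_\varepsilon(\lambda(z))$ plus a far remainder controlled by the Chebyshev bound $\|w\|_X^2\leq\varepsilon^{-2}\|(A-\lambda(z)I)z\|_X^2$. Where you genuinely diverge is in estimating the far contribution $\|Cw\|_Y$: the paper writes the far part as $(A_{\lambda(z)}-\lambda(z)I)^{-1}f$ and invokes Proposition \ref{RW} (the Russell--Weiss/Ramdani--Takahashi--Tenenbaum--Tucsnak uniform bound $\|C(A_\lambda-\lambda I)^{-1}\|\leq M$, which rests on the admissibility assumption \eqref{admm}), getting a bound uniform in $\lambda(z)$, whereas you use only $C\in\mathcal{L}(X_2,Y)$ together with the elementary inequality $\lambda_k\leq(1+\lambda(z)/\varepsilon)\,|\lambda_k-\lambda(z)|$ on the far modes, paying a factor $(1+\lambda(z)/\varepsilon)^2$. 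The trade-off: your argument is more elementary and self-contained (no admissibility needed for this implication), but your threshold decays like $\widetilde\varepsilon(\lambda)\sim\psi(\lambda)/\lambda^{2}$, while the paper's choice $\beta(\lambda)=\frac12\bigl(2M/\psi(\lambda)+1/\varepsilon\bigr)^{-1}$ decays only like $\psi(\lambda)$; since this function feeds into the observation time $T(\lambda)$ of Theorem \ref{main} and into the constants of Theorem \ref{mainsquare3}, the paper's route is quantitatively sharper for large $\lambda$. In fact the paper explicitly records your alternative in the remark following the theorem, noting that Proposition \ref{RW} only serves to improve the behavior of $\varepsilon(\lambda)$. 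One small slip: from $\widetilde\varepsilon\leq\frac{7}{16}\varepsilon^{2}$ you get $\|w\|_X\leq\frac{\sqrt{7}}{4}\|z\|_X$, not $\|w\|_X\leq\frac12\|z\|_X$; but the bound you actually use, $\|u\|_X\geq\frac34\|z\|_X$, does follow by orthogonality since $\|u\|_X^2\geq\bigl(1-\frac{7}{16}\bigr)\|z\|_X^2=\frac{9}{16}\|z\|_X^2$, so the conclusion $\|Cz\|_Y^2\geq\frac14\psi(\lambda(z))\|z\|_X^2$ stands.
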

%%%%%%%%
\begin{proof}
%We first remark that since $\Psi$ is a decreasing function  we have $D(\frac{1}{\Psi}(A)) 
%\subset X_2$, and
%\beqs
%z\in D\left(\frac{1}{\Psi}(A)\right)  \textrm{    iff   } \sum_{k=0}^\infty\frac{1}{\Psi^2(\lambda_k) }  %z_k^2<+\infty.
 %\eeqs

Let $\lambda \in \mathbb R_+, $  and $\beta>0$ being fixed.
A direct calculation
shows that if 
 \[  z = \sum_{k \in N_\beta(\lambda)} z_k \phi_k, \]
 we have
 
 \[
 (\lambda(z)-\lambda)\|z\|_X^2 = \sum_{k \in N_\beta(\lambda)} (\lambda_k -\lambda)z_k^2 .
 \]
 Hence 
 
 \[
 |\lambda(z)-\lambda| < \beta.
 \]
 On the other hand 
 \[
 \|Az\|_X^2  -\lambda^2(z)\|z\|_X^2 =\|(A-\lambda(z)I) z\|_X^2 \leq 2\|(A-\lambda I) z\|_X^2
  +2|\lambda-\lambda(z)|^2\|z\|_X^2 <2\beta^2\|z\|_X^2 .
 \]
Then,  we deduce from the  spectral coercivity in Definition~\ref{dwc}  that  \eqref{swcc} holds if we choose 
$\beta$ such that  $2\beta^2 < \eps$.

\medskip

Now, we shall prove the opposite implication.
 Assume that \eqref{swcc} is satisfied for all
$\lambda \in \mathbb R_+, $ and  let 
 \[  z = \sum_{k=1}^{+\infty} z_k \phi_k, \] 
being in $X_2\setminus\{0\}$, and satisfying  the inequality

\begin{eqnarray}\label{opp}
 \frac{\|Az\|_X^2}{\|z\|_X^2} -\lambda^2(z) <\beta(\lambda(z)),
 \end{eqnarray}
where $\beta$ will be chosen later in terms of $\eps$ and $\psi$. \\

Set 
\beq \label{opp2}
((A-\lambda(z)I) z= f.
\eeq
We deduce from~\eqref{opp}, the following estimate
\beq \label{opp3}
\|f\|_X^2 \leq  \beta\|z\|_X^2.
\eeq
We now introduce the following orthogonal decomposition of $z$: 
\beq \label{orthog}
z &=& z^0+\widetilde z, 
\eeq
with

\beq
z^0 =  \sum_{k \in N_\eps(\lambda(z))} z_k \phi_k,\;\; \widetilde z = 
 \sum_{k \notin N_\eps(\lambda(z))} z_k \phi_k.
\eeq
We deduce from \eqref{opp}, \eqref{opp2} and \eqref{opp3} the following estimate

\beq \label{luft1}
 \|\widetilde z \|_X^2 = \sum_{k \notin N_\eps(\lambda(z))} z_k^2 
 = \sum_{k \notin N_\eps(\lambda(z))} \frac{f_k^2}{(\lambda(z)-\lambda_k)^2} 
 \leq \frac{1}{\eps^2}\|f\|_X^2 \leq \frac{\beta}{\eps^2} \|z \|_X^2. 
\eeq
On the other hand the inequality \eqref{swcc} for $\lambda= \lambda(z)$ implies

\beq   \label{luft2}
 \|z^0\|_X^2\leq \frac{\|Cz^0\|_{Y}^2}{ \psi(\lambda(z))}.
\eeq

The following result has been proved
for admissible operator $C$  first on $(0, +\infty)$ in \cite{RW}, and 
on $(0, T)$ in \cite{RTTT}.
\begin{proposition} \label{RW}
For each  $\varepsilon>0$  
and  $\lambda \in \RR_+,$ we define the subspace $V(\lambda) \subset X$ by
\ba
V(\lambda) : = \left\{ \phi_k: \; k\notin N_\varepsilon(\lambda) \right\},
\ea
and we denote $A_\lambda:  V(\lambda) \cap X_2 \rightarrow X$, the restriction of the unbounded 
operator to $V(\lambda)$. \\

Then, there  exists a constant $M>0$, such that 
 \beq
 \|C(A_\lambda-\lambda I)^{-1}\|_{\mathcal L(V(\lambda), Y)} \leq M, \quad \forall \lambda \in \RR_+.
 \eeq

\end{proposition}
%%%%%%%%%%%%%%%%%%%%%
We deduce from \eqref{opp2} and \eqref{orthog},  the following inequality

\beq
\|Cz^0\|_{Y}^2 \leq   2\|Cz\|_{Y}^2+ 2\|C\widetilde z\|_{Y}^2
 \leq 2\|Cz\|_{Y}^2+2\| C(A_{\lambda(z)}-\lambda(z) I)^{-1}f\|_{Y}^2. \label{intineq}
\eeq
Applying now the results of Proposition \ref{RW} on \eqref{intineq}, we get

\be \label{innnn}
\|Cz^0\|_{Y}^2 \leq 2\|Cz\|_{Y}^2+2M\|f\|_{X}^2.
\ee
Inequalities \eqref{opp3} and \eqref{innnn}, give

\ba
\|Cz^0\|_{Y}^2 \leq 2\|Cz\|_{Y}^2+ 2M\beta \|z \|_X^2.
\ea

Now, using the inequality \eqref{luft2}, we get

\beq   \label{luft3}
 \|z^0\|_X^2\leq 2 \frac{\|Cz\|_{Y}^2}{ \psi(\lambda(z))}+\frac{2M\beta}{\psi(\lambda(z))} \|z \|_X^2. 
\eeq

Combining \eqref{luft1} and \eqref{luft3}, we obtain
\beqs
\|z\|_X^2 =  \|z^0\|_X^2 + \|\widetilde z \|_X^2 \leq 
\rho(\lambda(z))  \|z \|_X^2+ 2 \frac{\|Cz\|_{Y}^2}{\psi(\lambda(z))},
\eeqs

with 

\be
\rho(\lambda(z)): = \left(\frac{2M}{\psi(\lambda(z))}  +\frac{1}{\varepsilon}
\right)\beta(\lambda(z)). 
\ee
 By taking 
 \be
 \beta(\lambda(z)) := \frac{1}{2}\left(\frac{2M}{\psi(\lambda(z))}  +\frac{1}{\varepsilon}
\right)^{-1},
 \ee
 we find
 \beqs
  \frac{1}{4} \, \psi(\lambda(z)) \|z\|_X^2 \leq \|Cz\|_{Y}^2.
 \eeqs
 One can check easily that $\beta(\lambda)$ belongs to $\cc$. Then 
 $C$ becomes spectrally coercive with the  functions $ \beta(\lambda),  \frac{1}{4} \, \psi(\lambda) \in \cc$.

\end{proof}

%%%%%%%

\begin{remark}
Theorem \ref{swc} shows that the results of the paper \cite{RTTT} by M. Tucsnak {\it and al}. correspond to the 
particular  case of spectral coercivity where $\varepsilon$ and $\psi$ are constant functions. Finally, applying 
Proposition \ref{RW} is not necessary to prove the theorem. In fact we can bound in inequality \eqref{intineq},
$C$ by $\|C\|^2 (\lambda(z)+\varepsilon)^2\frac{\beta}{\eps^2} \|z \|_X^2$ where $ \|C\|$ is the norm of $C$  in 
$\mathcal L(X_2, Y).$ Applying the  results of  Proposition \ref{RW}  improves the behavior of $\varepsilon(\lambda)$
for large  $\lambda$.

\end{remark}

%%%%%%%%%%%%%%%%%
\section{Application to observability of the Schr\"odinger equation} \label{sec5}
%%%%%%%%%%%%%%%%%%%%%%%%%%
Let   $\Omega = (0,  \pi) \times  (0,  \pi), $  and  $\partial \Omega$ be its boundary. 
We consider the following initial and boundary value problem:
  
\be
\label{syst0}
\left\{
\begin{array}{ll}
z^{ \prime }(x,t) +i \Delta z(x,t) = 0, \, x \in \Omega, \, t > 0,  \\

z(x, t) = 0, \, x\in \partial \Omega, \, t >0, \\
z(x, 0) = z_0(x), x\in \Omega.
\end{array}
\right.
\ee
Let $\Gamma$ be an open nonempty subset of $\partial \Omega$. Define 
$C $ to be the  following boundary  observability operator

\be\label{syst01}
y(x, t)=  C z(x,t) =  \partial_\nu z\vert_ \Gamma,
\ee
where $\nu$ is the outward  normal vector  on $\partial \Omega,$ and $ \partial_\nu $ is the normal 
derivative.

\medskip
 
We further show that 
the  observation  system \eqref{syst0}-\eqref{syst01} fits perfectly in the general formulation of the
system~\eqref{maineq}.  \\
  
  Let $X = H^1_0(\Omega)$  be the Hilbert space with  scalar
  product 
  \ba
  \langle v, w\rangle_X = \int_{\Omega} \nabla u \cdot \nabla v \, dx. 
  \ea
  
 Therefore $A= -\Delta: X_2 \subset X \rightarrow X$, is a  linear unbounded self-adjoint, strictly 
positive operator with a compact resolvent. Hence the 
operator $iA$ generates a strongly continuous group of isometries in $X$
denoted $(e^{itA})_{t\in \mathbb{R}}$.  Moreover  for $\beta \geq 0$, $X_\beta = D(A^{\frac{\beta}{2}})$
is given by
 
 \ba
 X_\beta= \left\{\phi \in  H^1_0(\Omega): \, (-\Delta)^{\frac{\beta}{2}} \phi \in  H^1_0(\Omega) \right\}.
 \ea
Then the  observability operator $C:  X_2 \rightarrow Y:=L^2(\Gamma),$ defined by \eqref{syst01}, is 
a bounded operator. In addition it is known  that $C$  is an admissible observability operator, that is  for 
any $T>0$ there exists a constant $C_T>0$, such that the following inequality holds 
\ba
\int_0^T \int_{\Gamma}\left|  \partial_\nu  z\right|^2 ds(x) dt \leq C_T^2 
\int_{\Omega}  \left|\nabla z_0\right|^2 dx,
 \ea
for all $z_0 \in X_2$.\\

The eigenvalues of $A$ are 
\be \label{eigenva}
\lambda_{m, n} = m^2+n^2, \;\; m, n  \in \mathbb N^*.
\ee
A corresponding family of normalized  eigenfunctions in  $ H^1_0(\Omega)$ are

\be \label{eigenfu}
\phi_{m,n}(x) = \frac{2}{\pi \sqrt{m^2+n^2}} \sin(n\pi x_1)\sin(m\pi x_2),  \;\; m, n  \in \mathbb N^*,\;\;
x=(x_1, x_2) \in \Omega.
\ee

Next we derive  observability inequalities corresponding to  different geometrical  assumptions
on the observability set $\Gamma$.

\medskip

{\it  Assumption {\bf I}}:  We  assume that  $\Gamma$  contains at least two touching sides of $\Omega$. \\

In this case it is known that $\Gamma$ satisfies the geometrical assumptions of 
\cite{BLR92}, and the exact controllability is reached \cite{Le}. We will show that it is indeed
the situation by applying our coercivity test.\\

Consider the Helmholtz equation defined by

\be
\label{usystem}
\left\{
\begin{array}{ll}
\Delta u +k^2u = f, \, x \in \Omega,  \\
u = 0, \, x\in \partial \Omega\setminus \overline \Gamma, \\
\partial_\nu u -ik u =0, \, x\in \Gamma, 
\end{array}
\right.
\ee
where $g \in L^2(\Gamma)$ and $f\in L^2(\Omega)$.

\medskip

 It has been shown using  Rellich's 
identities (which are somehow related to the multiplier approach in observability \cite{Li,Li1})  
the following result \cite{He}.
%%%%%%%%%%%
\begin{proposition} \label{Pfrequency} Under the assumptions {\bf I} on $\Gamma$, a  solution $u \in H^1(\Omega)$ to the system
\eqref{usystem} satisfies the following inequality
\be
k\|u\|_{L^2(\Omega)} + \|\nabla u\|_{L^2(\Omega)} \leq c_0 \left(\|f\|_{L^2(\Omega)} +\|g\|_{L^2(\Gamma)}\right),
\ee
for all $k \geq k_0$, where $k_0>0$ and  $c_0>0$ are constants that only depend on $\Gamma$.
\end{proposition}
%%%%%%%%%%%

We deduce from Proposition \ref{Pfrequency} the following inequality 
\be
\|z\|_X  \leq   c_1\left( \|Az -\lambda(z) z\|_{X} + \|C z\|_{Y}\right),  
\ee
for all $z \in X_2\setminus \{0\},$ where $\lambda(z)$ is the $A$-frequency of $z$, and
 $c_1>0$ are constants that only depend on $\Gamma$.
Then by taking $\varepsilon (\lambda) = \frac{1}{4c_1^2}$, we find
that $C$ is spectrally coercive with $\psi(\lambda) =\frac{1}{4c_1^2}$, which implies in turn that the system 
 \eqref{syst0}-\eqref{syst01} is exactly observable.\\
 
 %%%%%%%%%%%%%%%%%%%%%%%%%
\begin{theorem} \label{mainsquare1}
Under the assumptions {\bf I} on $\Gamma$, the  system \eqref{syst0}-\eqref{syst01}, is exactly observable.
\end{theorem}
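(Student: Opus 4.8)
The plan is to show that the boundary observation operator $C$ of \eqref{syst01} is spectrally coercive in the sense of Definition~\ref{dwc} with \emph{constant} functions $\varepsilon$ and $\psi$, and then to invoke Theorem~\ref{main}. Since a positive constant $\psi$ is in particular bounded below by a positive constant, the weak observability inequality produced by Theorem~\ref{main} is precisely exact observability of \eqref{syst0}-\eqref{syst01}.

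First I would record the frequency-domain inequality that the discussion preceding the theorem extracts from Proposition~\ref{Pfrequency}, namely
\[
\|z\|_X \leq c_1\left(\|(A-\lambda(z)I)z\|_X + \|Cz\|_Y\right), \qquad \forall\, z\in X_2\setminus\{0\}.
\]
The point to verify is that a Dirichlet state $z\in X_2$ solves the impedance Helmholtz problem \eqref{usystem} with frequency $k=\sqrt{\lambda(z)}$, source $f=-(A-\lambda(z)I)z=\Delta z+\lambda(z)z$, and --- crucially because $z$ vanishes on \emph{all} of $\partial\Omega$, so that the impedance term $-ikz$ drops on $\Gamma$ --- boundary data $g=\partial_\nu z|_\Gamma = Cz$. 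Proposition~\ref{Pfrequency} then bounds $\|z\|_X=\|\nabla z\|_{L^2(\Omega)}$ by $c_0(\|f\|_{L^2(\Omega)}+\|g\|_{L^2(\Gamma)})$, and Poincar\'e's inequality converts $\|f\|_{L^2(\Omega)}=\|(A-\lambda(z)I)z\|_{L^2(\Omega)}$ into the $X$-norm of $(A-\lambda(z)I)z$, giving the displayed bound.

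The passage to spectral coercivity is then purely algebraic. Squaring and using $(a+b)^2\leq 2a^2+2b^2$ yields $\|z\|_X^2\leq 2c_1^2\|(A-\lambda(z)I)z\|_X^2+2c_1^2\|Cz\|_Y^2$. By the identity \eqref{magic}, the hypothesis $\frac{\|Az\|_X^2}{\|z\|_X^2}-\lambda^2(z)<\varepsilon$ reads exactly $\|(A-\lambda(z)I)z\|_X^2<\varepsilon\|z\|_X^2$, so the choice of the \emph{constant} $\varepsilon=\frac{1}{4c_1^2}$ absorbs the first term into the left-hand side and leaves $\frac12\|z\|_X^2\leq 2c_1^2\|Cz\|_Y^2$, i.e. $\|Cz\|_Y^2\geq\psi\|z\|_X^2$ with the constant $\psi=\frac{1}{4c_1^2}$. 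Thus $C$ is spectrally coercive with constant $\varepsilon,\psi$. Finally, with $\varepsilon$ constant the defining equation \eqref{timeT} becomes $T\varepsilon=\theta_1$, so $T(\lambda)\equiv\theta_1/\varepsilon$ is independent of $\lambda$, and with $\psi$ constant the inequality \eqref{wobst} reduces to the uniform bound $\theta_2\psi\|z_0\|_X^2\leq\int_0^T\|Cz(t)\|_Y^2\,dt$, valid for every $z_0$ and every $T\geq\theta_1/\varepsilon$ --- which is exact observability.

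The substantive analytic input is entirely carried by Proposition~\ref{Pfrequency} (the Rellich/multiplier estimate, which I assume). Granting it, the only steps requiring care are the bookkeeping that identifies the impedance data $g$ with $Cz$ above, and the fact that Proposition~\ref{Pfrequency} is stated only for $k\geq k_0$: the displayed inequality is immediate for $\lambda(z)\geq k_0^2$, while the complementary bounded range $\lambda_1\leq\lambda(z)<k_0^2$ involves only finitely many eigenvalues and is closed off by a standard compactness--unique-continuation argument, using that the normal-derivative operator $C$ is injective on each (finite-dimensional) eigenspace when $\Gamma$ contains two adjacent sides. This supplement is what lets the constant $\psi$ be taken uniform over all frequencies, and it is the point I would expect to need the most attention.
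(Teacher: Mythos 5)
Your proposal is correct and follows essentially the same route as the paper: Proposition~\ref{Pfrequency} gives the resolvent inequality $\|z\|_X \leq c_1\left(\|(A-\lambda(z)I)z\|_X + \|Cz\|_Y\right)$, from which spectral coercivity with the constant functions $\varepsilon=\psi=\frac{1}{4c_1^2}$ follows, and Theorem~\ref{main} then yields exact observability. If anything you are more careful than the paper, which passes over both the identification $g=Cz$ (valid precisely because $z$ vanishes on all of $\partial\Omega$, so the impedance term drops) and the restriction $k\geq k_0$ in Proposition~\ref{Pfrequency}; your low-frequency compactness/unique-continuation supplement addresses the latter point, which the paper leaves implicit.
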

%%%%%%%%%%%%%%%%%%%%%%%%%%%

{\it  Assumption {\bf II}}:  We  assume that  $\Gamma$  in a one side of $\Omega$. Without
 loss of generality, we further assume  that  $ \Gamma= (0, \pi) \times \{0\}$. 

\medskip

The following result has been derived  partially in \cite{BaK2}.
%%%%%%%%%%%
\begin{proposition} \label{frequency2} Under the assumptions {\bf II} on $\Gamma$, a  
solution $u \in H^1(\Omega)$ to the system
\eqref{usystem} satisfies the following inequality
\be
k\|u\|_{L^2(\Omega)} + \|\nabla u\|_{L^2(\Omega)} \leq c_0 k\left(\|f\|_{L^2(\Omega)} + 
 \|g\|_{L^2(\Gamma)}\right),
\ee
for all $k \geq k_0$, where $k_0>0$ and  $c_0>0$ are constants that only depend on $\Gamma$.
\end{proposition}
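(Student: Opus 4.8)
The plan is to exploit the product structure of the square. Since $u$ satisfies the Dirichlet condition on the left, right and top sides and the impedance condition $\partial_\nu u-iku=g$ only on the bottom $\Gamma=(0,\pi)\times\{0\}$, I would expand $u$ in the $x_1$-Dirichlet sine basis, $u(x_1,x_2)=\sum_{n\ge 1}v_n(x_2)\sin(nx_1)$, which automatically encodes the vanishing on $x_1=0,\pi$. Substituting into $\Delta u+k^2u=f$ decouples the problem into the family of one-dimensional impedance problems
\begin{equation*}
v_n''+(k^2-n^2)v_n=f_n \text{ on }(0,\pi),\qquad v_n(\pi)=0,\qquad v_n'(0)+ikv_n(0)=-g_n,
\end{equation*}
where $f_n,g_n$ are the sine-coefficients of $f,g$ (the sign following from $\partial_\nu=-\partial_{x_2}$ on $\Gamma$). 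By Parseval the target estimate reduces, mode by mode, to the uniform (in $n$ and in $k\ge k_0$) one-dimensional bound
\begin{equation*}
(k^2+n^2)\|v_n\|_{L^2(0,\pi)}^2+\|v_n'\|_{L^2(0,\pi)}^2\le C\,k^2\bigl(\|f_n\|_{L^2(0,\pi)}^2+|g_n|^2\bigr).
\end{equation*}

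Then I would prove this scalar bound from the two energy identities for $v_n$ obtained by multiplying the ODE by $\bar v_n$ and integrating by parts, inserting the impedance relation $v_n'(0)=-ikv_n(0)-g_n$. The imaginary part gives $k|v_n(0)|^2=\operatorname{Im}\int_0^\pi f_n\bar v_n-\operatorname{Im}(g_n\overline{v_n(0)})$, which controls the boundary trace $|v_n(0)|$ with a gain of $1/k$; the real part expresses $(k^2-n^2)\|v_n\|^2-\|v_n'\|^2$ in terms of the data. For the evanescent modes $n^2\ge k^2+1$ the real-part identity is sign-definite and yields $\|v_n'\|^2+(n^2-k^2)\|v_n\|^2\lesssim\|f_n\|^2+|g_n|^2$. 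For the propagative modes $n^2\le k^2-1$ one instead uses a one-dimensional Rellich multiplier (multiply by $(x_2-\pi)\bar v_n'$) to recover $\|v_n'\|^2$ and $(k^2-n^2)\|v_n\|^2$ from the boundary contribution, the impedance term at $x_2=0$ being absorbed through the trace bound above.

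The hard part will be the near-threshold band $|k^2-n^2|\lesssim 1$, i.e.\ $n\approx k$, where $v_n''\approx f_n$ and neither energy identity is coercive. There the solution operator of the one-dimensional problem is only bounded by an $O(1)$ constant on each such mode, whereas the weight $k^2+n^2\approx 2k^2$ multiplying $\|v_n\|^2$ is of size $k^2$. This mismatch is precisely the origin of the factor $k$ on the right-hand side, and it is the feature that distinguishes the present one-sided situation from the two-sided case of Proposition~\ref{Pfrequency}: there the radial multiplier anchored at the opposite corner makes $\beta\cdot\nu=0$ on both unobserved sides and controls the full gradient, so no surviving threshold band forces a loss, while a single vertical multiplier here controls only the $x_2$-component of the gradient and leaves the tangential oscillations on $\Gamma$ uncontrolled.

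Finally I would reassemble the estimate. Summing the three regimes over $n$ by Parseval, I would check that away from threshold the ratio $(k^2+n^2)/|k^2-n^2|$ stays bounded (so those modes cost nothing beyond the data), that the near-threshold band absorbs into the prefactor $k^2$, and that the trace contributions $k|v_n(0)|^2$ and $k^2\int_\Gamma|u|^2$ are controlled for $k\ge k_0$ large via the imaginary-part identity with exactly one power of $k$. Collecting the worst constant across the bands and using the real-part identity to trade $\|\nabla u\|_{L^2}$ control for $k\|u\|_{L^2}$ control then yields the claimed estimate $k\|u\|_{L^2(\Omega)}+\|\nabla u\|_{L^2(\Omega)}\le c_0k\bigl(\|f\|_{L^2(\Omega)}+\|g\|_{L^2(\Gamma)}\bigr)$.
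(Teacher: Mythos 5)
You are attempting a statement for which the paper itself gives no proof: Proposition~\ref{frequency2} is only attributed (``derived partially'') to the cavity--scattering paper \cite{BaK2}, so there is no in-paper argument to compare against. Your strategy (separation of variables in $x_1$, reduction to the one-dimensional impedance problems $v_n''+(k^2-n^2)v_n=f_n$, $v_n(\pi)=0$, $v_n'(0)+ikv_n(0)=-g_n$, and mode-by-mode energy and Rellich estimates) is the natural one and is faithful to the spirit of \cite{BaK2}, and your treatment of the evanescent modes and of the propagative modes far from threshold is sound. But the step on which everything hinges is wrong: on the near-threshold band the one-dimensional solution operator is \emph{not} $O(1)$; its norm is of order $k$. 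Take $n$ large, $k=\sqrt{n^2+1}$ (so $\mu_n^2:=k^2-n^2=1$), $g_n=0$ and $f_n(x_2)=\sin(x_2-\pi)$. The unique solution of $v''+v=f_n$, $v(\pi)=0$, $v'(0)+ikv(0)=0$ is
\[
v(x_2)=\Bigl(\tfrac12-\tfrac{ik\pi}{2}\Bigr)\sin(x_2-\pi)-\tfrac{x_2-\pi}{2}\cos(x_2-\pi),
\]
because $\sin(x_2-\pi)$ has vanishing trace at $x_2=0$, so the impedance term $ikv(0)$ provides no damping for it; equivalently, the complex-Robin operator $-d^2/dx_2^2$ with these boundary conditions has an eigenvalue at distance $O(1/k)$ from $\mu_n^2=1$, with quasi-eigenfunction $\sin(x_2-\pi)$. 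Hence $\|v\|_{L^2}\ge\|\operatorname{Im}v\|_{L^2}=\tfrac{\pi k}{2}\,\|f_n\|_{L^2}$, and your mode-wise target $(k^2+n^2)\|v_n\|^2+\|v_n'\|^2\le Ck^2\bigl(\|f_n\|^2+|g_n|^2\bigr)$ fails on this mode by a factor $k^2$. Consistently, if you track constants in your Rellich step, it closes only for $k^2-n^2\gtrsim k$, not down to $k^2-n^2\ge 1$.

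This is not a repairable technicality in the stated generality: putting $u=v(x_2)\sin(nx_1)$, $f=f_n(x_2)\sin(nx_1)$, $g=0$ produces an $H^1$ solution of \eqref{usystem} with $k\|u\|_{L^2(\Omega)}\ge c\,k^2\|f\|_{L^2(\Omega)}$, so the inequality of Proposition~\ref{frequency2}, read literally with the impedance condition $\partial_\nu u-iku=g$ on $\Gamma$, is \emph{false} along the sequence $k=\sqrt{n^2+1}\to\infty$; for that boundary-value problem the best uniform bound carries $c_0k^2$ on the right. The factor $k$ belongs to a different problem: in \cite{BaK2} the aperture carries the nonlocal half-space Dirichlet-to-Neumann condition, whose symbol on the $n$-th mode is $i\sqrt{k^2-n^2}$ rather than $ik$, and with that condition the resonant coefficient above is $O(1)$ --- which is exactly what your ``$O(1)$ on the near-threshold band'' claim implicitly assumes. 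Note finally that the failure requires $u|_\Gamma\neq 0$, whereas the only way the paper uses the proposition is for $z\in X_2\subset H^1_0(\Omega)$, i.e.\ for functions vanishing on all of $\partial\Omega$, with $g:=\partial_\nu u|_\Gamma$. For that restricted class your scheme does close: mode-wise one has the extra constraint $v_n(0)=0$, and expanding $v_n$ in the Dirichlet basis $\sin(mx_2)$, the single resonant coefficient is recovered from the datum $v_n'(0)=-g_n$ while all others are controlled with $O(1)$ constants. Proving that restricted statement (which is the one consistent with the weak observability $\psi(\lambda)\sim 1/\lambda$ obtained independently in Theorem~\ref{mainsquare3}) is the correct way to salvage both the proposition and your argument.
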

%%%%%%%%%%%

We again deduce from Proposition \ref{frequency2} the following resolvent inequality 
\be
\|z\|_X  \leq   c_1(1+\sqrt{\lambda(z)}) \left( \|Az -\lambda(z) z\|_{X} + \|C z\|_{Y}\right),  
\ee

for all $z \in X_2\setminus \{0\},$ where $\lambda(z)$ is the $A$-frequency of $z$, and 
$c_1>0$ is a constant that only depends on $\Gamma$.
Then by taking $\varepsilon (\lambda) = \frac{1}{8c_1^2(1+\lambda)}$, we find that $C$ is spectrally coercive 
with $\psi(\lambda) =\frac{1}{8c_1^2(1+\lambda)}$.
This  implies in turn that the system 
 \eqref{syst0}-\eqref{syst01} is weakly observable: there exists a constant $T^0>0$  such that 
 \be \label{obsRec}
 \psi(\lambda(z_0)) \|z_0\|^2_{H^1_0(\Omega)} \leq \int_0^T  \int_{\Gamma}\left|  \partial_\nu  z\right|^2 ds(x) dt,
 \ee
 for all $z_0 \in X_2$, and for all $T\geq T^0$.\\

 %%%%%%%%%%%%%%%%%%%%%%%%%
\begin{theorem} \label{mainsquare2}
Under the assumptions {\bf II} on $\Gamma$, the  system \eqref{syst0}-\eqref{syst01}, is weakly observable for any $z_0 \in X$.
\end{theorem}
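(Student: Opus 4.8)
The plan is to verify that Theorem~\ref{mainsquare2} follows by a direct application of the machinery already assembled in the paper, with the only new analytic input being the resolvent estimate stated in Proposition~\ref{frequency2}. First I would confirm the abstract setup: with $X=H^1_0(\Omega)$, $A=-\Delta$, $Y=L^2(\Gamma)$, and $C z = \partial_\nu z\vert_\Gamma$, the pair $(A,C)$ satisfies all the hypotheses of Section~\ref{sec1}, namely that $A$ is self-adjoint, strictly positive with compact resolvent, that $iA$ generates a unitary group, and that $C\in\mathcal L(X_2,Y)$ is admissible. The admissibility is precisely the boundary estimate recalled just before \eqref{eigenva}, so the system \eqref{syst0}--\eqref{syst01} is a genuine instance of \eqref{maineq}.

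The core step is to convert the Helmholtz resolvent bound into a spectral coercivity statement in the sense of Definition~\ref{dwc}. Given $z\in X_2\setminus\{0\}$, I would set $k^2=\lambda(z)$, $f=(A-\lambda(z)I)z = \Delta z + \lambda(z) z$ up to sign, and $g = C z = \partial_\nu z\vert_\Gamma$, so that $z$ solves the Helmholtz problem \eqref{usystem} with these data. Proposition~\ref{frequency2} then yields
\[
\|z\|_X \leq c_1\bigl(1+\sqrt{\lambda(z)}\bigr)\bigl(\|(A-\lambda(z)I)z\|_X + \|Cz\|_Y\bigr),
\]
which is exactly the resolvent inequality displayed in the text. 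Squaring and using $(a+b)^2\le 2a^2+2b^2$ gives
\[
\|z\|_X^2 \leq 2c_1^2\bigl(1+\sqrt{\lambda(z)}\bigr)^2\bigl(\|(A-\lambda(z)I)z\|_X^2 + \|Cz\|_Y^2\bigr).
\]
Now I would split into the two cases of Definition~\ref{dwc}: if $\|(A-\lambda(z)I)z\|_X^2 = \|Az\|_X^2-\lambda^2(z)\|z\|_X^2 < \varepsilon(\lambda(z))\|z\|_X^2$ with $\varepsilon(\lambda)=\tfrac{1}{8c_1^2(1+\lambda)}$, then the first term on the right absorbs at most half of $\|z\|_X^2$, and rearranging leaves $\|z\|_X^2 \le 4c_1^2(1+\sqrt{\lambda(z)})^2\|Cz\|_Y^2$, which after bounding $(1+\sqrt\lambda)^2\le 2(1+\lambda)$ delivers $\|Cz\|_Y^2\ge\psi(\lambda(z))\|z\|_X^2$ with $\psi(\lambda)=\tfrac{1}{8c_1^2(1+\lambda)}$. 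I should check that both $\varepsilon$ and $\psi$ so defined are continuous, strictly positive, and decreasing, hence lie in $\cc$; this is immediate since $\lambda\mapsto(1+\lambda)^{-1}$ is.

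Having established spectral coercivity with these explicit $\varepsilon,\psi\in\cc$, the conclusion is automatic: Theorem~\ref{main} gives the equivalence between spectral coercivity and the weak observability inequality \eqref{wobst}, so the system is weakly observable. Since $\psi$ here decays like $(1+\lambda)^{-1}$ rather than being bounded below, this is genuinely weak (not exact) observability, consistent with the one-sided geometry violating the geometric control condition. The final inequality \eqref{obsRec}, valid for all $T\ge T^0$ with $T^0 = T(\lambda(z_0))$ as in \eqref{timeT}, is just the specialization of \eqref{wobst}, and the statement ``for any $z_0\in X$'' follows by the density of $X_2$ in $X$ together with the continuous extension of the observation map guaranteed by admissibility \eqref{admm}. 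The main obstacle is not in this abstract deduction, which is routine once Proposition~\ref{frequency2} is in hand, but rather in the proof of that Proposition itself; fortunately that is cited to \cite{BaK2} and need not be reproduced here.
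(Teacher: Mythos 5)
Your proposal is correct and follows essentially the same route as the paper: both deduce the resolvent inequality $\|z\|_X \leq c_1\bigl(1+\sqrt{\lambda(z)}\bigr)\bigl(\|(A-\lambda(z)I)z\|_X + \|Cz\|_Y\bigr)$ from Proposition~\ref{frequency2}, take $\varepsilon(\lambda)=\psi(\lambda)=\frac{1}{8c_1^2(1+\lambda)}$ to get spectral coercivity in the sense of Definition~\ref{dwc}, and then invoke Theorem~\ref{main}. Your explicit squaring, case-splitting, and the bound $(1+\sqrt{\lambda})^2\leq 2(1+\lambda)$ simply fill in the arithmetic the paper leaves implicit, correctly accounting for the factor $\frac{1}{8c_1^2(1+\lambda)}$.
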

%%%%%%%%%%%%%%%%%%%%%%%%%%%

 \medskip
 
 {\it  Assumption {\bf III}}:  We  assume that  $\overline{\Gamma}$ is included 
 in a one side of $\Omega$. Without
 loss of generality, we further assume  that  $ (\alpha, \beta)\times\{0\} \subset  \Gamma
 \subset  (0, \pi) \times \{0\}$, with $0<\alpha<\beta<\pi.$  Then, we have the following weak
 observability inequality.
 
 %%%%%%%%%%%%%%%%%%%%%%%%%
\begin{theorem} \label{mainsquare3}
Under the assumptions {\bf III} on $\Gamma$, the  system \eqref{syst0}-\eqref{syst01}, is weakly observable for any $z_0 \in X$ with $\widetilde \varepsilon(\lambda) = \frac{1}{ \frac{4M}{\delta_\Gamma} \lambda  +1}$
 and $\widetilde \psi(\lambda) = \frac{\delta_\Gamma}{4 \lambda},$ where $\delta_\Gamma>0$ is a constant that only depends on $\Gamma$, and $M>0$ is the  admissibility constant appearing in Proposition \ref{RW}.  

\end{theorem}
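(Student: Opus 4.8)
The plan is to obtain the theorem from the abstract equivalences already proved, reducing the whole statement to a single one–dimensional trace inequality for the explicit eigenfunctions \eqref{eigenfu}. By Theorem~\ref{main} it is enough to show that $C:z\mapsto\partial_\nu z|_\Gamma$ is spectrally coercive, and by Theorem~\ref{swc} it suffices to prove that $C$ is weakly spectrally coercive in the sense of Definition~\ref{dsc}, with a fixed constant $\varepsilon_0$ and weight $\psi(\lambda)=\delta_\Gamma/\lambda$. Indeed, substituting $\psi(\lambda)=\delta_\Gamma/\lambda$ and this fixed $\varepsilon_0$ into the conversion formulas $\widetilde\psi=\tfrac14\psi$ and $\beta(\lambda)=\tfrac12\bigl(2M/\psi(\lambda)+1/\varepsilon_0\bigr)^{-1}$ from the proof of Theorem~\ref{swc} produces $\widetilde\psi(\lambda)=\tfrac{\delta_\Gamma}{4\lambda}$ and a $\widetilde\varepsilon$ of the announced form $\bigl(\tfrac{4M}{\delta_\Gamma}\lambda+1\bigr)^{-1}$ (the additive constant being fixed by the choice of $\varepsilon_0$), with $M$ the admissibility constant of Proposition~\ref{RW}. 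By Lemma~\ref{dsc2} weak spectral coercivity need only be checked on spectral clusters, so the theorem is reduced to the cluster estimate $\|Cz\|_{L^2(\Gamma)}^2\ge \tfrac{\delta_\Gamma}{\lambda_n}\|z\|_{H^1_0}^2$ for every $z=\sum_{k\in N_{\varepsilon_0}(\lambda_n)}z_k\phi_k$.

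Next I would make this cluster estimate explicit. Since $\Gamma\subset(0,\pi)\times\{0\}$ has outward normal $(0,-1)$, \eqref{eigenfu} gives $C\phi_{m,n}=\pm\tfrac{2m}{\pi\sqrt{m^2+n^2}}\sin(n x_1)$. Taking $\varepsilon_0<1$, the set $N_{\varepsilon_0}(\lambda_n)$ is the full eigenspace of a single integer eigenvalue $p:=\lambda_n$, and since distinct lattice points of $\{m^2+n^2=p\}$ have distinct tangential frequencies, writing $S_p=\{n:\ p-n^2 \text{ is a positive perfect square}\}$ and $m(n)=\sqrt{p-n^2}$ we get
\[
Cz(x_1)=\frac{2}{\pi\sqrt p}\sum_{n\in S_p}m(n)\,z_{m(n),n}\sin(n x_1),\qquad \|z\|_{H^1_0}^2=\sum_{n\in S_p}|z_{m(n),n}|^2 .
\]
Hence, with $b_n:=m(n)\,z_{m(n),n}$, the cluster estimate follows from the $p$–uniform trigonometric inequality
\[
\int_\Gamma\Bigl|\sum_{n\in S_p}b_n\sin(n x_1)\Bigr|^2dx_1\ \ge\ c\sum_{n\in S_p}|b_n|^2 ,
\]
because $m(n)\ge1$ forces $\sum_n|b_n|^2\ge\|z\|_{H^1_0}^2$, whence $\|Cz\|_{L^2(\Gamma)}^2\ge\tfrac{4c}{\pi^2 p}\|z\|_{H^1_0}^2$ and $\delta_\Gamma=4c/\pi^2$. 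Note that the entire $1/\lambda$ loss is produced by the prefactor $1/p$ coming from the normalization of the normal derivative (the transverse modes with $m=1$ are the weakest), so the subinterval must not create any further $p$–dependent loss.

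The main obstacle is therefore precisely this uniform lower bound for $\{\sin(n x_1)\}_{n\in S_p}$ on a fixed interval $\Gamma$ of length $<\pi$. For an arbitrary set of integer frequencies such a bound is false, since $\{e^{in x_1}\}_{n\in\mathbb Z}$ is overcomplete on intervals shorter than $2\pi$; what rescues the estimate is the arithmetic structure of $S_p$. Its elements are distinct integers, and lattice points on the circle $m^2+n^2=p$ are well separated, so for any fixed window length $S_p$ meets every window of that length in a uniformly bounded number of points. I would exploit this by partitioning $S_p$ into clusters of uniformly bounded cardinality and diameter, separated by gaps larger than $2\pi/|\Gamma|$, and then apply a generalized Ingham (Kahane) inequality: the large inter-cluster gaps meet the interval-length requirement, while the bounded cluster size keeps the finite Gram determinants of each cluster uniformly nondegenerate. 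One already sees the mechanism for a pair of consecutive frequencies $N,N+1$, whose Gram matrix on $\Gamma=(\alpha,\beta)$ converges as $N\to\infty$ to $\tfrac12\left(\begin{smallmatrix}\beta-\alpha&\sin\beta-\sin\alpha\\ \sin\beta-\sin\alpha&\beta-\alpha\end{smallmatrix}\right)$, with determinant $\tfrac14\bigl[(\beta-\alpha)^2-(\sin\beta-\sin\alpha)^2\bigr]>0$. Assembling the cluster bounds yields $c=c(\Gamma)>0$, hence $\delta_\Gamma$, and completes the proof. I expect the verification of the uniform separation/cluster structure of the lattice set $S_p$ (and the resulting $p$–independent Ingham constant) to be the genuinely delicate point; the rest is the bookkeeping of the reduction above.
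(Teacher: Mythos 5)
Your reduction is exactly the paper's: Theorem~\ref{main}, Theorem~\ref{swc} and Lemma~\ref{dsc2} reduce the statement to a coercivity estimate on single spectral clusters; for $\varepsilon_0<1$ these clusters are precisely the eigenspaces $\mathrm{span}\{\phi_{m,n}:m^2+n^2=p\}$ (the eigenvalues being integers); and your trace computation, together with $m(n)\ge 1$, reproduces the paper's Lemma~\ref{wscsquare}, including the $1/\lambda$ weight and the bookkeeping of $\widetilde\varepsilon,\widetilde\psi$ through the proof of Theorem~\ref{swc}. The one place where you depart from the paper is the crucial one: the uniform inequality
\begin{equation*}
\int_\alpha^\beta\Bigl|\sum_{n\in S_p}b_n\sin(n x_1)\Bigr|^2dx_1\ \ge\ c(\Gamma)\sum_{n\in S_p}|b_n|^2,\qquad\text{uniformly in }p .
\end{equation*}
The paper does not prove this inequality; it invokes it as Proposition 7 of \cite{RTTT} (inequality \eqref{rrrH2}), whose proof there is a substantial argument combining Ingham-type estimates with arithmetic information about the sets $S_p$. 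You propose to prove it yourself, and that is where the proposal has a genuine gap.

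The gap is the assertion that ``lattice points on the circle $m^2+n^2=p$ are well separated, so for any fixed window length $S_p$ meets every window of that length in a uniformly bounded number of points,'' from which you extract clusters of bounded cardinality separated by gaps larger than $\gamma:=2\pi/(\beta-\alpha)$. Well-separation of lattice points yields nothing of the sort: distinct elements of $S_p$ are merely distinct integers, so the free bound for a window of length $L$ is $L+1$, whereas your cluster argument needs, uniformly in $p$, a count strictly below $L/\gamma$ (with $\gamma>2$, and large when $\Gamma$ is short); otherwise maximal $\gamma$-chains, hence clusters, can have unbounded cardinality and the generalized Ingham constants degenerate. Such chains do occur: $\{31,32,33\}\subset S_{1105}$ and $\{70,71,73,74\}\subset S_{5525}$, coming from lattice points close to the axis. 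Elementary tools do not close this: the integrality of the $m$-coordinates helps only where the circle is steep, the three-points-per-arc (area) bound only applies to arcs of length $O(p^{1/6})$, and for lattice points at heights $m\asymp p^{1/4}$ to $p^{1/3}$ a frequency window of fixed length corresponds to an arc of length comparable to or larger than $p^{1/4}$ (the square root of the radius), exactly at or beyond the threshold where Cilleruelo--C\'ordoba type theorems stop; uniform boundedness of lattice point counts on arcs of that length is linked to open problems (Rudin's conjecture on squares in arithmetic progressions). Your $2\times 2$ Gram determinant computation treats clusters of two frequencies and says nothing about bounding cluster sizes uniformly in $p$. So either you cite \cite{RTTT} at exactly the point the paper does --- in which case your proof coincides with the paper's --- or the argument is incomplete at its central step.
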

%%%%%%%%%%%%%%%%%%%%%%%%%%%

 Different from the proofs in the two first cases, the proof of the weak observability in the theorem above is
 based on intrinsic properties of the eigenelements of $A$ and the operator  $C$. We first present the following 
useful  result.
 \begin{lemma} \label{wscsquare}
 The operator $C$ is weakly spectrally coercive, that is,
 the following inequality 
\be \label{ttt1}
\|Cz\|_Y^2 \geq \psi(\lambda_{m,n}) \|z\|_{X }^2,
 \ee
 holds for all $z = \sum_{k \in N_{\frac{1}{2}}(\lambda_{m,n})} z_k \phi_k $
 where $\psi(\lambda) =\frac{\delta_\Gamma}{\lambda},$ with $\delta_\Gamma>0$ is a constant that only depends on $\Gamma$. 
 
\end{lemma}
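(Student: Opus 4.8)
The plan is to make the spectral cluster explicit, compute the boundary trace of each eigenfunction, and thereby reduce \eqref{ttt1} to a single one–dimensional inequality for sums of sines restricted to the subinterval $(\alpha,\beta)$. First I would note that because every eigenvalue $\lambda_{m,n}=m^2+n^2$ is an integer, the condition $|\lambda_{m',n'}-\lambda_{m,n}|<\tfrac12$ forces $\lambda_{m',n'}=\lambda_{m,n}$; hence $N_{\frac12}(\lambda_{m,n})$ is exactly the set of lattice points on the circle $\{(m',n')\in(\mathbb N^*)^2:\ m'^2+n'^2=N\}$, where $N:=\lambda_{m,n}$. Thus any admissible $z$ is a finite combination $z=\sum_{m'^2+n'^2=N}z_{m',n'}\,\phi_{m',n'}$, and since the $\phi_{m',n'}$ are orthonormal in $X=H^1_0(\Omega)$ we have $\|z\|_X^2=\sum_{m'^2+n'^2=N}z_{m',n'}^2$.

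Next I would compute the trace. On the edge $\{x_2=0\}$ the outward normal is $-e_2$, so $C\phi_{m',n'}=\partial_\nu\phi_{m',n'}|_\Gamma$ equals, up to the normalization in \eqref{eigenfu}, the quantity $-\tfrac{2m'}{\pi\sqrt N}\,s_{n'}(x_1)|_\Gamma$, where $s_{n'}$ is the $x_1$–sine of integer frequency $n'$. The decisive structural observation is that on the circle the value $n'$ determines $m'=\sqrt{N-n'^2}$, so the frequencies $n'$ appearing in $z$ are \emph{pairwise distinct positive integers}. Writing $c_{n'}:=m'z_{m',n'}$ and using $(\alpha,\beta)\times\{0\}\subset\Gamma$, I obtain
\[
\|Cz\|_Y^2=\frac{4}{\pi^2 N}\int_\Gamma\Big|\sum_{m'^2+n'^2=N}m'z_{m',n'}\,s_{n'}(x_1)\Big|^2dx_1\ \geq\ \frac{4}{\pi^2 N}\int_\alpha^\beta\Big|\sum_{n'}c_{n'}\,s_{n'}(x_1)\Big|^2dx_1 .
\]
The lemma then follows from one uniform estimate: there exists $\delta_0=\delta_0(\alpha,\beta)>0$, independent of $N$, with $\int_\alpha^\beta|\sum_j c_j s_{n_j}(x)|^2dx\geq \delta_0\sum_j c_j^2$ for every finite family of distinct positive integers $n_j$ arising from such a circle. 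Granting this and using $m'\geq1$, so that $\sum m'^2z_{m',n'}^2\geq\sum z_{m',n'}^2=\|z\|_X^2$, the display yields $\|Cz\|_Y^2\geq \tfrac{4\delta_0}{\pi^2 N}\|z\|_X^2$, which is exactly \eqref{ttt1} with $\psi(\lambda)=\delta_\Gamma/\lambda$ and $\delta_\Gamma:=4\delta_0/\pi^2$.

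The main obstacle is precisely this uniform lower Riesz bound. It \emph{cannot} hold for arbitrary finite integer frequency sets — taking $\{1,\dots,M\}$ one can concentrate $\sum c_j s_{n_j}$ inside $(0,\alpha)$ and make the left-hand side arbitrarily small — so the proof must exploit that the frequencies lie on a circle. My route is to control the Gram matrix $G_{ij}=\int_\alpha^\beta s_{n_i}s_{n_j}\,dx$, whose diagonal entries are $\tfrac{\beta-\alpha}{2}+O(1/n_i)$ and whose off–diagonal entries are $O(1/|n_i-n_j|)$ by the elementary identity $s_{n_i}s_{n_j}=\tfrac12[\cos((n_i-n_j)x)-\cos((n_i+n_j)x)]$ together with Riemann–Lebesgue. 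The key number–theoretic input is that circle lattice points are sparse: for each fixed small difference $d$ a given $n_i$ has at most the two neighbours $n_i\pm d$ on the circle, so the graph of \emph{dangerous} small–gap interactions has uniformly bounded degree even though the total count $r_2(N)$ may be large. I would first handle each bounded cluster of near–consecutive frequencies by the two/three–mode positivity argument (the form $1+2\tfrac{c_ic_j}{c_i^2+c_j^2}\cos x$ stays bounded away from $0$ on $(\alpha,\beta)$ since $|\cos x|<1$ there), and then bound the operator norm of the remaining off–diagonal part. The subtle point, and the real crux, is that a crude Gershgorin bound only gives $O(\log N)$ from $\sum_d 1/d$ and is therefore insufficient; one must instead exploit the oscillatory cancellation in the tail to obtain an $N$–independent operator–norm estimate for the off–diagonal block. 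Once that uniform bound is secured, the $\delta_0$ produced depends only on $(\alpha,\beta)$ and the bookkeeping above completes the proof.
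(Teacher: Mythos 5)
Your reduction is exactly the paper's: you identify $N_{\frac12}(\lambda_{m,n})$ with the lattice points on the circle $p^2+q^2=N$, observe that distinct points have distinct first coordinates, compute the normal-derivative trace, and reduce \eqref{ttt1} to a uniform (in $N$) lower Riesz-type bound for $\int_\alpha^\beta\bigl|\sum_j c_j\sin(n_j x)\bigr|^2dx$ over frequencies coming from the circle, finishing with $q\geq 1$ exactly as the paper does. The difference is at the crux: the paper does not prove this trigonometric inequality at all, but cites it as Proposition 7 of \cite{RTTT} (inequality \eqref{rrrH2}), whereas you attempt to establish it from scratch via a Gram-matrix argument --- and that attempt is left genuinely open. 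You say so yourself: "once that uniform bound is secured", after conceding that Gershgorin only yields an $O(\log N)$ off-diagonal bound and that some unspecified "oscillatory cancellation" must rescue the argument. Since this inequality \emph{is} the entire content of the lemma (everything before it is bookkeeping), the proposal has a real gap rather than an alternative proof.

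Moreover, the arithmetic input you isolate cannot close that gap even in principle. The fact that for a fixed difference $d$ an integer $n_i$ has at most the two neighbours $n_i\pm d$ is true of \emph{any} set of distinct positive integers; it has nothing to do with the circle. In particular it holds for the set $\{1,\dots,M\}$, which is precisely your own counterexample showing the uniform bound fails for general integer frequencies. So a bounded-degree-per-difference statement plus cluster positivity plus an operator-norm estimate on the tail cannot suffice: whatever property completes the proof must distinguish circle lattice points from $\{1,\dots,M\}$, i.e.\ a genuinely stronger sparsity statement (of the kind exploited in \cite{RTTT}, where the Ingham-type inequality rests on the fact that lattice points on arcs of $p^2+q^2=N$ are few --- a nontrivial number-theoretic result). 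To repair the proof you should either invoke Proposition 7 of \cite{RTTT} directly, as the paper does, or import the arc-sparsity theorem and carry out the Ingham-type argument in full; the sketch as written would not compile into a proof.
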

\begin{proof}
Let $\lambda_{m, n} = m^2+n^2$ be fixed eigenvalue, and let 
$z = \sum_{k \in N_{\frac{1}{2}}(\lambda_{m,n})} z_k \phi_k $ be a fixed vector in $X_2\setminus\{0\}$.\\

 It is easy to check that 
\be
N_{\frac{1}{2}}(\lambda_{m,n})= \{k = (p, q) \in \mathbb N^*\times \mathbb N^*:  p^2+q^2 = m^2+n^2\}.
\ee
Therefore

\beq 
\|Cz\|_Y^2&=&  
\int_\Gamma\left |  \sum_{k \in N_{\frac{1}{2}}(\lambda_{m,n})} z_k C\phi_k(x)\right |^2 ds(x), \nonumber\\
&\geq& \frac{4}{\pi^2} \int_\alpha^\beta \left |  \sum_{p^2+q^2=  m^2+n^2}  
\frac{q}{(p^2+q^2)^{\frac{1}{2}}} z_{p, q}\sin(px_1) \right |^2 dx_1,\label{rrrH}
\eeq

Based on techniques related to nonharmonic Fourier series, the following inequality has been
proved in Proposition 7 of \cite{RTTT}. 

\beq \label{rrrH2}
 \int_\alpha^\beta \left |  \sum_{p^2+q^2=  m^2+n^2}  
\frac{q}{(p^2+q^2)^{\frac{1}{2}}}z_{p, q}\sin(px_1) \right |^2 dx_1 \geq \tilde \delta_{\alpha, \beta}
\sum_{p^2+q^2=  m^2+n^2}\frac{q^2}{p^2+q^2}|z_{p, q}|^2, 
\eeq
where $ \tilde \delta_{\alpha, \beta}>0$ only depends on $\alpha$ and $\beta$. \\

Combining now inequalities \eqref{rrrH} and
\eqref{rrrH2}, we find 

\beqs
\|Cz\|_Y^2 \geq  \delta_\Gamma
\sum_{p^2+q^2=  m^2+n^2}\frac{q^2}{p^2+q^2}|z_{p, q}|^2
 \geq  \frac{\delta_\Gamma}{\lambda_{m,n}} \|z\|_X^2,
\eeqs
which achieves the proof. Here $ \delta_\Gamma :=  \frac{4}{\pi^2} \tilde \delta_{\alpha, \beta}$ only
depends on $\Gamma$.

\end{proof}

%%%%%%%%%%%%%%

\begin{proof}[Proof of Theorem \ref{mainsquare3}.]

The result of the theorem is a direct consequence of  Lemma~\ref{dsc2}, Theorem~\ref{swc}, and
Lemma~\ref{wscsquare}. We finally obtain that  $C$ is spectrally coercive 
with  $\widetilde \varepsilon(\lambda) = \frac{1}{2}\left(\frac{2M}{ \psi(\lambda)}  +1
\right)^{-1} $
 and $\widetilde \psi(\lambda) = \frac{1}{4}\psi(\lambda)$, which finishes the proof.

\end{proof}
\begin{remark}
We observe that the result of Theorem \ref{mainsquare2} based on clever
analysis of Fourier series derived in \cite{BaK2},  is indeed  a particular case of Theorem \ref{mainsquare3}
($\alpha=0$ and $\beta =\pi$) obtained from Ingham type inequalities. 

\end{remark}
%%%%%%%%%%%%%%%%%%%%
\section*{Appendix} 
\label{appendixA}

Let $\chi \in C_0(\RR)$ be a cut off function with a
compact support in $(-1, 1)$ given by
\beqs
\chi(s) = (1-|s|)e^{-2|s|}\mathbbm{1} _{(-1,1)}. 
\eeqs
Then we have the following result.

\begin{proposition} The function $\chi(s)$ satisfies 
\beqs
\chi \in H^1_0(-1,1),\;  \frac{\kappa_1}{1+\tau^2}
 \leq |\widehat \chi(\tau)|  \leq \frac{\kappa_2}{1+\tau^2}, \tau \in \RR,
\eeqs
where $\kappa_1> \kappa_2$ are two fixed constants. 
\end{proposition}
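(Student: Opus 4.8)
The plan is to establish the two assertions separately: the regularity $\chi\in H^1_0(-1,1)$, which is routine, and the two-sided decay estimate on $|\widehat\chi|$, which carries all the content. For the regularity I would note that $\chi(s)=(1-|s|)e^{-2|s|}$ is continuous on $[-1,1]$, vanishes at the endpoints $s=\pm1$ because $1-|s|=0$ there, and is Lipschitz (it is smooth on $(-1,0)$ and on $(0,1)$, with a single corner at $s=0$ at which the one-sided derivatives are finite). A Lipschitz function on a bounded interval lies in $H^1$, and vanishing at both endpoints places it in $H^1_0(-1,1)$.

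For the estimate I would first use that $\chi$ is even and real, so that
\[
\widehat\chi(\tau)=\int_{-1}^{1}(1-|s|)e^{-2|s|}e^{-i\tau s}\,ds=2\int_0^1(1-s)e^{-2s}\cos(\tau s)\,ds
\]
is itself real and even; in particular the conclusion is insensitive to the normalisation chosen for the Fourier transform, any constant factor being absorbed into $\kappa_1,\kappa_2$. I would then evaluate the elementary integral explicitly by writing $\cos(\tau s)=\operatorname{Re}e^{i\tau s}$ and using $\int_0^1(1-s)e^{as}\,ds=(e^a-1-a)/a^2$ with $a=-2+i\tau$. Since $|a^2|^2=(4+\tau^2)^2$, taking real parts produces the closed form
\[
\widehat\chi(\tau)=\frac{2P(\tau)}{(4+\tau^2)^2},\qquad P(\tau)=4+3\tau^2+e^{-2}\bigl[(4-\tau^2)\cos\tau-4\tau\sin\tau\bigr],
\]
which one can sanity-check at $\tau=0$, where it returns $\widehat\chi(0)=\tfrac12(1+e^{-2})$ in agreement with a direct computation.

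The heart of the argument, and the step I expect to be most delicate, is to trap the numerator $P$ between fixed multiples of $1+\tau^2$; here the precise value of the exponential weight is what makes it work. Using $|4-\tau^2|\le 4+\tau^2$ together with the elementary bound $4|\tau|\le 4+\tau^2$, the oscillatory remainder is controlled by $e^{-2}\bigl[|4-\tau^2|+4|\tau|\bigr]\le 2e^{-2}(4+\tau^2)$, whence
\[
(4-8e^{-2})+(3-2e^{-2})\tau^2\le P(\tau)\le (4+8e^{-2})+(3+2e^{-2})\tau^2.
\]
The crucial numerical fact is that both $3-2e^{-2}$ and $4-8e^{-2}$ are strictly positive; this forces $P(\tau)>0$ for every $\tau$ and yields $c_1(1+\tau^2)\le P(\tau)\le c_2(1+\tau^2)$ with $c_1=3-2e^{-2}$ and $c_2=4+8e^{-2}$ (one checks $6e^{-2}<1$ to see these constants dominate the affine bounds). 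Were the weight $e^{-2|s|}$ replaced by a far flatter one, the lower coefficient could cease to be positive, which is precisely why this $\chi$ is the one chosen.

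Finally I would fold in the denominator: since $\widehat\chi(\tau)(1+\tau^2)=2P(\tau)(1+\tau^2)/(4+\tau^2)^2$ and the ratio $(1+\tau^2)/(4+\tau^2)$ increases monotonically from $\tfrac14$ at $\tau=0$ to $1$ as $|\tau|\to\infty$, we have $\bigl((1+\tau^2)/(4+\tau^2)\bigr)^2\in[\tfrac1{16},1)$, so that $\tfrac{c_1}{8}\le \widehat\chi(\tau)(1+\tau^2)\le 2c_2$. Because $P>0$ gives $\widehat\chi>0$, this is exactly $|\widehat\chi(\tau)|=\widehat\chi(\tau)$ and delivers the claimed inequality with $\kappa_1=c_1/8$ and $\kappa_2=2c_2$, which satisfy $\kappa_2>\kappa_1>0$ as required by \eqref{bbchi} (the ordering $\kappa_1>\kappa_2$ printed in the statement being a typo).
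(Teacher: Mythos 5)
Your proof is correct, but it takes a genuinely different route from the paper on the only hard point, the lower bound. You compute $\widehat\chi$ in closed form, $\widehat\chi(\tau)=2P(\tau)/(4+\tau^2)^2$ with $P(\tau)=4+3\tau^2+e^{-2}\bigl[(4-\tau^2)\cos\tau-4\tau\sin\tau\bigr]$ (this checks out, including the value $\widehat\chi(0)=\tfrac12(1+e^{-2})$), and then beat the oscillatory term by brute force, using $|4-\tau^2|+4|\tau|\le 2(4+\tau^2)$ and the numerical facts $4-8e^{-2}>0$, $3-2e^{-2}>0$. The paper instead never isolates the oscillation: it writes $\widehat\chi$ as the convolution of the Fourier transforms of the two factors of $\chi$, i.e.\ a strictly positive Lorentzian kernel convolved with the nonnegative Fej\'er-type kernel $\mathrm{sinc}^2(s/2)$, so positivity of $\widehat\chi$ is structural rather than numerical; the quantitative lower bound then comes from restricting the convolution integral to $|s|\le\tfrac14$, using $\mathrm{sinc}(s)\ge \tfrac2\pi$ there, and estimating the resulting $\arctan$ expression by its Taylor expansion. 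The trade-off is real: your argument is entirely elementary and yields explicit constants ($\kappa_1=(3-2e^{-2})/8$, $\kappa_2=2(4+8e^{-2})$), but, as you yourself observe, it depends on the specific size of the exponential weight; the paper's convolution argument survives any weight $e^{-c|s|}$, $c>0$, since a strictly positive kernel convolved with a nonnegative, not identically zero one stays strictly positive, so only the constants (not the method) degrade. Two further remarks in your favour: you are right that the ordering $\kappa_1>\kappa_2$ in the statement is a typo for $\kappa_2>\kappa_1>0$ (this is how the condition is stated in the body of the paper), and you actually prove $\chi\in H^1_0(-1,1)$, which the paper's proof omits entirely; note also that the paper's displayed closed form and convolution formula carry constants consistent with the weight $e^{-|s|}$ rather than $e^{-2|s|}$, a slip your self-contained computation avoids.
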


\begin{proof} Since $|\widehat \chi(\tau)| $ is even we shall prove the 
inequality only for $\tau \in \RR_+$.\\

A forward computation  gives
\[
\widehat \chi(\tau) = \frac{2}{1+\tau^2} +2\Re\left(\frac{1-e^{-(1+i\tau)}}{ (1+i\tau)^2}\right).
\]
Then
 
\[
|\widehat \chi(\tau)| \leq  \frac{6}{1+\tau^2}.
\]
On the other hand,  we have 
\beqs
\widehat \chi(\tau) = \int_\RR \frac{2}{1+(s-\tau)^2} 
  \mbox{sinc}^2 \left(\frac{s}{2}\right) ds.
  \eeqs
Using the estimate  $\mbox{sinc(s)} \geq \frac{2}{\pi}$ for $s\in (0, \frac{\pi}{2})$, we get
 \beqs
 \widehat \chi(\tau)  \geq \frac{4}{\pi}\int_{-\frac{1}{4}}^{\frac{1}{4}} \frac{1}{1+(s-\tau)^2} ds\\
   \geq \frac{4}{\pi} (\arctan(\frac{1}{4}-\tau)+\arctan(\frac{1}{4}+\tau)) = 
   \frac{4}{\pi} \arctan \left({\frac{\frac{1}{2}}{\frac{15}{16}+\tau^2}} \right)\\
   \geq \frac{4}{\pi}\left[ \frac{\frac{1}{2}}{\frac{15}{16}+\tau^2} -
    \frac{1}{3} \left(\frac{\frac{1}{2}}{\frac{15}{16}+\tau^2}\right)^3\right]
   \\ \geq   \frac{\frac{4}{3\pi}}{\frac{15}{16}+\tau^2}
   \\\geq \frac{\frac{4}{3\pi}}{1+\tau^2},
\eeqs
which finishes the proof.
\end{proof}

%%%%%%%%%%%%%%%%%%%%%%%%
\section*{Acknowledgements}
FT was supported by the grant ANR-17-CE40-0029 of the French National Research Agency ANR (project MultiOnde).

%%%%%%%%%%%%%%%%%%%%%%%%%%%%%%%%%%%%%%%%%%

\end{document}